\definecolor{red}{rgb}{1,0.1,0.1}
\definecolor{blue}{rgb}{0.1,0.1,1}
\definecolor{vb}{RGB}{160,32,240}
\theoremstyle{plain}
\newtheorem*{teo*}{Theorem}
\newtheorem*{prop*}{Proposition}
\numberwithin{equation}{section}
\newtheorem{teo}{Theorem}[section]
\newtheorem{lema}[teo]{Lemma}
\theoremstyle{remark}
\newtheorem{obs}[teo]{Remark}
\theoremstyle{definition}
\newtheorem*{mydef*}{Definition}
\begin{document}
	\title[harmonic analysis in nilmanifolds]{ Some harmonic analysis on commutative nilmanifolds}
	
	\author[A.~L.~Gallo]{Andrea L. Gallo}
	\address{A.~L.~Gallo \\ FaMAF \\ Universidad Nacional de C\'ordoba \\
		CIEM (CONICET) \\ 5000 C\'ordoba, Argentina}
	\email{andregallo88@gmail.com}
	
	\author[L.~V.~Saal]{Linda V. Saal}
	\address{L.~~Saal\\ FaMAF \\ Universidad Nacional de C\'ordoba \\
		CIEM (CONICET) \\ 5000 C\'ordoba, Argentina}
	\email{saal@mate.uncor.edu}

	\thanks{ The authors are  partially supported by
		CONICET and SECYT-UNC}

	\subjclass[2010]{43A80, 22E25}
	
	\dedicatory{\today}
	
	\keywords{Gelfand pairs, inversion formula, nilpotent Lie group, regular representation.}
	

	\begin{abstract}
		 In this work, we consider a family of Gelfand pairs $(K \ltimes N, N)$ (in short $(K,N)$) where $N$ is a two step nilpotent Lie group, and $K$ is the group of orthogonal automorphisms of $N$. This family has a nice analytic property: almost all these 2-step nilpotent Lie group have square integrable representations. 
		 In this cases, following Moore-Wolf's theory, we find an explicit expression for the inversion formula of $N$, and as a consequence, we decompose the regular action of $K \ltimes N$ on $L^{2}(N)$. This result completes the analysis carried out by Wolf in \cite{Wo1}, where the inversion formula is obtained in the case that $N$ has not square integrable representation. When $N$ is the Heisenberg group, we obtain the decomposition of $L^{2}(N)$ under the action of $K \ltimes N$ for all $K$ such that $(K,N)$ is a Gelfand pair. Finally, we also give a parametrization for the generic spherical functions associated to the pair $(K,N)$, and we give an explicit expression for these functions in some cases. 
	\end{abstract}
	
	\maketitle
	

\textbf{\bigskip }

\section{Introduction}
Let $G$ be a connected Lie group, and $K$ a compact subgroup of $G$. It is well known that the following are equivalent:
\begin{enumerate}
	\item The convolution algebra $L^{1}(K \backslash G/K)$ is commutative.
	\item The algebra $\mathcal{U}(\mathfrak{g})^{K}$ of $K$-invariant and left invariant differential operators on $G/K$ is commutative.
	\item The regular representation of $G$ on $G/K$ is multiplicity free.
	\item  For any irreducible unitary representation $\left( \rho ,\mathcal{H}%
	\right) $ of $K\ltimes N,$ the space $\mathcal{H}_{K}:=\left\{ v\in \mathcal{H%
	}:\rho \left( k\right) v=v\text{ for all }k\in K\right\} $ is at most one
	dimensional.
\end{enumerate}
When any of the above holds, we say that $(G,K)$ is a Gelfand pair.

Also, $G/K$ is called a nilmanifold if some nilpotent subgroup $N$ of $G$ acts transitively, and we say that $G/K$ is a commutative nilmanifold if $(G,K)$ is a Gelfand pair. In this work, $G/K$ is connected and simply connected, then $N$ acts simply transitively on $G/K$ and $G$ is the semidirect product $K \ltimes N$. We denote the Gelfand pair $(K \ltimes N,K)$ by $(K,N)$.

In the Vinberg's classification Theorem of commutative nilmanifolds, there is a big family that was defined by J. Lauret in \cite{L}. The Lauret's construction corresponds to the pairs where $K$ is the maximal orthogonal automorphism group, with exception of four cases. 
As we can see in \cite{Wo1}, in almost all the cases $N$ has a very surprising property: it has square integrable representation, with exception of three cases (two of which are in the Lauret's list). For this $N$, we develop the corresponding harmonic analysis, finding explicitly the inversion formula, and as a consequence we obtain the decomposition of the regular action of $G$ on $L^{2}(N)$. 
The remaining inversion formulas of the Lauret's list can  be found in \cite{Wo1}.\\

We now give a brief sumary of the results of the paper. In section 2, we introduce some preliminaries about the family described by J. Lauret, and some  results concerning nilpotent Lie groups.

In section 3, we present our main result and its proof. As a consequence we obtain a decomposition of the regular action on $L^{2}(N)$.

In section 4, we develop the harmonic analysis in the case  $(K,H_n)$, where $H_n$ is the $(2n+1)$-dimensional Heisenberg group.

In section 5, we describe the set of generic spherical functions associated to the Gelfand pair $(K,N)$.

\section{Preliminaries}

Let $\mathfrak{n}$ be a two step nilpotent Lie algebra with Lie bracket $%
\left[ \, \cdot, \cdot \, \right] $ and equipped with an inner product $\left\langle \, \cdot
, \cdot \, \right\rangle $. Then we write $\mathfrak{n=z \,\oplus \, } V$ where $\mathfrak{z}$
is its center and $V$ is the orthogonal complement of $\mathfrak{z.}$ Let $N$
be the connected simply connected Lie group with Lie algebra $\mathfrak{n,}$ and left invariant Riemannian metric determined by $%
\left\langle \, \cdot , \cdot \, \right\rangle.$ 

The group $N$ acts on $\mathfrak{n}$ by the
adjoint action $Ad$, and  $N$ acts on $\mathfrak{n}^{\ast }$, the dual space
of $\mathfrak{n}$, by the dual representation $Ad^{\ast }\left( n\right)
\lambda =\lambda \circ Ad\left( n^{-1}\right).$ Fixed a non trivial $%
\lambda \in \mathfrak{n}^{\ast },$ let $O\left( \lambda \right) :=\left\{
Ad^{\ast }\left( n\right) \lambda :n\in N\right\} $ be its coadjoint orbit.

We denote by $\widehat{N}$ the set of equivalence classes of irreducible
unitary representations of $N.$ From Kirillov's theory there is a
correspondence between $\widehat{N}$ and the set of coadjoint orbits.
Indeed, let 
\begin{equation} \label{corchetes}
B_{\lambda }\left( X,Y\right) :=\lambda \left( \left[ X,Y\right] \right), \, X,Y\in \mathfrak{n.}
\end{equation}
Let $\mathfrak{m}$ be a maximal isotropic subspace of $\mathfrak{n,}$ and
set $M=\exp \left( \mathfrak{m}\right) .$ Defining on $M$ the character $%
\chi _{\lambda }\left( \exp Y\right) =e^{i\lambda \left( Y\right) }$, the
irreducible representation corresponding to $O\left( \lambda \right) $ is
the induced representation $\rho _{\lambda }:=Ind_{M}^{N}\left( \chi
_{\lambda }\right) .$

Let $Z$ be the center of $N.$ Recall that an irreducible unitary
representation is called \textit{square integrable} if its matrix entries
are in $L^{2}\left( N/Z\right) .$ We denote by $\widehat{N}_{sq}$ the subset
of $\widehat{N}$ of square integrable classes. It follows from Moore-Wolf's theory that
\begin{enumerate}[(i)]
	\item If $\rho _{\lambda }\in $ $\widehat{N}$ has a matrix entry in $%
	L^{2}\left( N/Z\right) ,$ then $\rho _{\lambda }\in \widehat{N}_{sq}.$
	
	\item If $N$ has a square integrable representation then its Plancherel
	measure is concentrated on $\widehat{N}_{sq}.$
\end{enumerate}

We have that if $\rho _{\lambda }$ is a square integrable representation
then $B_{\lambda }$ is non degenerate on $V$ and the orbit is maximal,
that is $O\left( \lambda \right) =\lambda \mid_{\mathfrak{z}} \oplus  \, V^{\ast
}.$ Indeed, let $X_{\lambda }\in \mathfrak{z}$ be the representative of $\lambda {\mid }_{\mathfrak{z}}$, that is $\lambda \left( Y\right) =\left\langle Y,X_{\lambda
}\right\rangle $ for all $Y\in \mathfrak{z,}$ and denote by $\mathfrak{z}%
_{\lambda }$ the kernel of $\lambda {\mid}_{\mathfrak{z}}$. Let $\mathfrak{a}_{\lambda }$ be the subspace of $V$ where $B_{\lambda }$ is degenerate and let $\mathfrak{b}_{\lambda }$ be the complement of $\mathfrak{a}_{\lambda }$ in $V.$ Consider $\mathfrak{n}_{\lambda }=\mathfrak{
	a}_{\lambda } \oplus \, \mathfrak{b}_{\lambda } \oplus \, \mathbb{R}%
X_{\lambda }$ and $N_{\lambda }:=\exp \left( \mathfrak{n}_{\lambda }\right)
. $ We equip $\mathfrak{a}_{\lambda }$ with the trivial Lie bracket and $%
\mathfrak{h}_{\lambda }:=\mathfrak{b}_{\lambda } \oplus \, \mathbb{R}%
X_{\lambda }$ with Lie bracket 
\begin{equation*}
\left[ u,v\right] _{\mathfrak{h}_{\lambda }}=B_{\lambda }\left(
u,v\right) Y_{\lambda }, \ u,v\in \mathfrak{b}_{\lambda }, \ Y_{\lambda}:=\frac{X_{\lambda}}{|X_{\lambda|}}.
\end{equation*}

It is clear that $\mathfrak{h}_{\lambda }$ is a Heisenberg algebra and we
set $H_{\lambda }$ the corresponding Heisenberg group. We also set $%
A_{\lambda }:=\exp \left( \mathfrak{a}_{\lambda }\right).$ Since the
representation $\rho _{\lambda }$ is trivial on $\exp \left( \mathfrak{z}%
_{\lambda}\right),$ it factors through $N_{\lambda }.$ Identifying $%
N_{\lambda }$ with $A_{\lambda }\times H_{\lambda }$, we can write $\rho
_{\lambda }\left( a,n\right) =\chi \left( a\right) \rho _{\lambda }^{\prime
}\left( n\right) $ where $\chi $ is a unitary character of $A_{\lambda }$
and $\rho _{\lambda }^{\prime }$ is an irreducible representation of $%
H_{\lambda }.$ Thus $\rho _{\lambda }$ cannot be square integrable unless $%
\mathfrak{a}_{\lambda }=0.$

The reciprocal assertion is also true: if $B_{\lambda }$ is non
degenerate on $V$ (and thus $O_{\lambda }$ is maximal), then $\rho
_{\lambda }$ gives rise to an irreducible representation of $N_{\lambda }$
because $\lambda $ restricted to $\mathfrak{z}_{\lambda }$ is trivial. In
this case $N_{\lambda }$ is a Heisenberg group and every irreducible
representation of infinite dimension of $N_{\lambda }$ is square integrable,
so is $\rho _{\lambda }.$

This is a particular case of the following general result in the Moore-Wolf's theory. If $N$ is a connected simply connected nilpotent Lie group, the following are equivalent:
\begin{enumerate}[(i)]
	\item $\rho _{\lambda }$ is square integrable.
	\item The orbit $O_{\lambda \text{ }}$ is determined by $\lambda _{\mid } \mathfrak{z}$.
	\item $B_{\lambda }$ is non degenerate over $\mathfrak{n/z.}$\\
\end{enumerate}


The family to be considered in this work, was introduced by J. Lauret (see \cite{L}). Starting from a real representation $(\pi, V)$ of a compact Lie algebra $\mathfrak{g}= \mathfrak{c} \oplus \mathfrak{g}'$, where $\mathfrak{c}$ is the center of $\mathfrak{g}$ and $\mathfrak{g}' = [\mathfrak{g},\mathfrak{g}]$, let $\langle \cdot \,, \cdot \rangle_{\mathfrak{g}}$ and $\langle \cdot \,, \cdot \rangle_{V}$ be inner products on $\mathfrak{g}$ and $V$ respectively, such that $\langle \cdot \,, \cdot \rangle_{\mathfrak{g}}$ is $\mathit{ad}(\mathfrak{g})$-invariant and $\langle \cdot \,, \cdot \rangle_{V}$ is $\pi$-invariant. Let $\mathfrak{n}= \mathfrak{g} \oplus V$ and let $\langle \cdot \,, \cdot \rangle$ be the inner product in $\mathfrak{n}$ such that $\langle \cdot \,, \cdot \rangle_{\mathfrak{g} \times \mathfrak{g}} = \langle \cdot \,, \cdot \rangle_{\mathfrak{g}}$ and $\langle \cdot \,, \cdot \rangle_{V \times V} = \langle \cdot \,, \cdot \rangle_{V}$ with $\langle \mathfrak{g}\,, V \rangle = 0$. Such inner product $\langle \cdot \,, \cdot \rangle$ is called $\mathfrak{g}$-invariant.
The Lie algebra structure on $\mathfrak{n}$ is defined by assuming that $\mathfrak{g}$ is the center of $\mathfrak{n}$ and the Lie bracket on $V$ is given by 
\begin{equation}\label{corchete2}
\left\langle \left[ u,v\right] ,X\right\rangle =\left\langle \pi \left(
X\right) u,v\right\rangle \text{ for all }u,v\in V,X\in \mathfrak{g.}
\end{equation}

We denote by $N(\mathfrak{g},V)$ the connected simply connected Lie group with Lie algebra $\mathfrak{n}$. It is remarked that this construction does not depend of the $\mathfrak{g}$-invariant inner product $\langle \cdot\, , \cdot \rangle$ (up to Lie group isomorphism). Moreover, if $(\pi,V)$ and $(\pi',V')$ are two representations of $\mathfrak{g}$ and there exists an automorphism $\varphi$ of $\mathfrak{g}$ and an isomorphism $T: \, V \rightarrow V'$ such that $T\pi(x)T^{-1} = \pi'(\varphi(x))$  for all $x \in \mathfrak{g}$, then $N(\mathfrak{g},V)$ and $N(\mathfrak{g},V')$ are isomorphic Lie groups, see \cite{L}.

The group of orthogonal automorphisms of $N(\mathfrak{g},V)$ is $K=G' \times U$, where $G'$ is the connected simply connected Lie group with Lie algebra $\mathfrak{g}'=[\mathfrak{g}\,,\mathfrak{g}]$ and $U$ is the identity's connected component of the orthogonal group of intertwining operators of $(\pi,V)$. The component $U$ acts trivially on the center of $\mathfrak{n}$ and each $g \in G'$ acts on $\mathfrak{n}$ by $(Ad(g),\pi(g))$ where we also denote by $\pi$ the corresponding representation of $G'$ (see \cite{JL}, Theorem 3.12).

The group $N(\mathfrak{g},V)$ is said \textit{decomposable} if it is a direct product of Lie groups of the form 
$$N(\mathfrak{g},V)=N(\mathfrak{h}_1,V_1) \times N(\mathfrak{h}_2,V_2).$$
Otherwise we will say that $N(\mathfrak{g},V)$ is \textit{indecomposable}.
The list $A$ of Gelfand pairs of the form $(G' \times U,N(\mathfrak{g},V))$ where $N(\mathfrak{g},V)$ is indecomposable is the following: \\

\ \ \ \ \ \ \ \ \ \ \ \ \ \ \ \ \ \ \ \ \ \ \ \ \ \ \ \ \ \ \ \ \ \ \ \ \ \ \ \ \ \ \ \ \ \ \ \ \ \ \ \ \ \ \ \ \ \ \ \ \ \ \ \ \ \ \ \ \ \ 
\ \ \ \ \ \ \ \ \ \ \ \ \ \ \ \ \ \ \ \ \ \ \ \ \ \ \ \ \ \ \ \ \ \ \ $(A)$
\begin{enumerate}[(I)] 
	\item $(SU(2) \times Sp(n), N(\mathfrak{su}(2),(\mathbb{C}^{2})^{n})), \, n\geq1$, where $\mathfrak{su}(2)$ acts on $(\mathbb{C}^{2})^{n}$ as Im($\mathbb{H}$) acts component-wise on $\mathbb{H}^{n}$ by quaternion product on the left side, where $\mathbb{H}$ denotes the quaternions and Im($\mathbb{H}$) the imaginary quaternions. (Heisenberg type)\\
	\item $(SU(2) \times Sp(n),N(\mathfrak{su}(2), \mathbb{R}^{3} \oplus (\mathbb{C}^{2})^{n})), \, n \geq 0$, where $\mathfrak{su}(2)$ acts as $\mathfrak{so}(3)$ by rotations on $\mathbb{R}^{3}$, and $\mathfrak{su}(2)$  acts component-wise on ($\mathbb{C}^{2})^{n}$ in the standard way. \\
	\item $(Spin(4) \times Sp(k_1) \times Sp(k_2), N(\mathfrak{su}(2) \oplus \mathfrak{su}(2), (\mathbb{C}^{2})^{k_1} \oplus \mathbb{R}^{4} \oplus (\mathbb{C}^{2})^{k_2})), \, k_1+k_2 \geq 1$, where the real vector space $\mathbb{R}^{4}= (\mathbb{C}^2 \otimes \mathbb{C}^{2})_{\mathbb{R}}$ denotes the standard representation of $\mathfrak{so}(4)=\mathfrak{su}(2) \oplus \mathfrak{su}(2)$ and the first copy of $\mathfrak{su}(2)$ acts only on $(\mathbb{C}^{2})^{k_1}$ and the second one only on $(\mathbb{C}^{2})^{k_2}$.\\
	\item $(Sp(2) \times Sp(n), N(\mathfrak{sp}(2),(\mathbb{C}^{4})^{n}), \, n \geq 1$, where $\mathfrak{sp}(2)$ acts component-wise on $(\mathbb{H}^{2})^{n}$ in the standard way (identifying $\mathbb{H}^{2}$ with $\mathbb{C}^{4})$.\\

	\item $(SU(n) \times \mathbb{S}^{1}, N(\mathfrak{su}(n),\mathbb{C}^{n})), \, n \geq 3$, where $\mathbb{C}^{n}$ denotes the standard representation of $\mathfrak{su}(n)$ regarded as a real representation.\\
	\item $(SO(n),N(\mathfrak{so}(n),\mathbb{R}^{n})), \, n \geq 2$ (free two-step nilpotent Lie group), where $\mathbb{R}^{n}$ denotes the standard representation of $\mathfrak{so}(n)$.\\

	\item  $(U(n),N(\mathbb{R},\mathbb{C}^{n})), \, n \geq 1$ (Heisenberg group).\\
	\item $(SU(2) \times U(k) \times Sp(n), N(\mathfrak{u}(2),(\mathbb{C}^{2})^{k} \oplus (\mathbb{C}^{2})^{n})), \, k \geq 1, n \geq 0$, where the center of $\mathfrak{u}(2)$ acts non-trivially only on $(\mathbb{C}^{2})^{k}$, in fact, $(\mathbb{C}^{2})^{n}$ denotes the representation of $\mathfrak{su}(2)$ described in the item (I) and $\mathfrak{u}(2)$ acts component-wise on $(\mathbb{C}^{2})^{k}$ in the standard way.\\
	\item $(SU(n) \times \mathbb{S}^{1}, N(\mathfrak{u}(n),\mathbb{C}^{n})), \, n \geq 3$, where $\mathbb{C}^{n}$ denotes the standard representation of $\mathfrak{u}(n)$ regarded as a real representation.\\
	\item $(G' \times U,N(\mathfrak{g},V))$ where:\\
	
	\begin{itemize}
		\item [$-$] $\mathfrak{g} := \mathfrak{su}(m_1) \oplus \cdots \oplus \mathfrak{su}(m_{\beta}) \oplus\mathfrak{su}(2) \oplus \cdots \oplus \mathfrak{su}(2) \oplus \mathfrak{c}$, with $\alpha$ copies of $\mathfrak{su}(2), \, m_i \geq 3$ for all $1 \leq i \leq \beta$ and $\mathfrak{c}$ is an abelian component.
		
		\item [$-$] $V:=\mathbb{C}^{m_1} \oplus \cdots \oplus \mathbb{C}^{m_{\beta}} \oplus \mathbb{C}^{2k_1+2n_1} \oplus \cdots \oplus \mathbb{C}^{2k_{\alpha}+2n_{\alpha}}$, where $k_j \geq 1$ and $n_j \geq 0$ for all $1 \leq j \leq \alpha$.
		
		\item [$-$] $\mathfrak{g}$ acts on $V$ as follows: for each $1 \leq i \leq \beta+\alpha, \, \mathfrak{c}$ has a maximal subespace, denoted by $\mathfrak{c}_i$, and dim($\mathfrak{c}_i$)=1, acting non-trivially only on $\mathbb{C}^{m_i}$ (as the representation stated in item (V)) and for $\beta +1 \leq i \leq \beta + \alpha, \, \mathfrak{su}(2) \oplus\mathfrak{c}_i$ acts non-trivially only on  $\mathbb{C}^{2k_i+2n_i}$ (as the representation stated in the item (VIII)).
		
		\item [$-$] $U:=\mathbb{S}^{1} \times \cdots \mathbb{S}^{1} \times U(k_1) \times sp(n_1) \times \cdots\times U(k_{\alpha}) \times Sp(n_{\alpha})$, with $\beta$ copies of $\mathbb{S}^{1}$.
	\end{itemize}
\end{enumerate}

\medskip

Here we only consider the groups $N\left( 
\mathfrak{g},V\right) $ which have \textit{square integrable representation}; this condition holds for almost all $N\left( \mathfrak{g},V\right) $ in the family, with exception
of two cases: case II and case VI with $n$ odd  as is shown in \cite{Wo}, pages 339--341.  From Moore-Wolf's theory it follows
that the Plancherel measure $\nu $ for $N\left( \mathfrak{g},V\right)$
is concentrated on the equivalent classes of square integrable
representations. Moreover, they are parametrized by the elements of the dual
space $\mathfrak{g}^{\ast }$ of $\mathfrak{g,}$ and $\nu \left( \lambda
\right) =c\left\vert P\left( \lambda \right) \right\vert d\lambda $, where $P$ is a polynomial function, $d\lambda $ is
the Lebesgue measure on $\mathfrak{g}^{\ast }$ and $c$ is a specific
constant (Theorem 14.2.14 in \cite{Wo}).

We denote by $N$ the group $N(\mathfrak{g},V)$, and let $\left( \rho _{\lambda },\mathcal{H}%
_{\lambda }\right) $ be the irreducible representation of $N$ corresponding to $%
\lambda$, with $\lambda \in \mathfrak{g}^{\ast}$. 
For $k \in K$, let $\rho_{\lambda}^{k}(n):=\rho_{\lambda}(k\cdot n)$. So $\rho_{\lambda}^{k}$ is another irreducible representation of $N$ acting on $\mathcal{H}_{\lambda}$, and the stabilizer of $\rho_{\lambda}$ is $$K_{\rho_{\lambda}}:= \{k \in K \ : \ \rho_{\lambda}^{k} \ \text{is equivalent to  } \rho_{\lambda}\}.$$
Thus, for $k \in K_{\rho_{\lambda}}$ there exists a unitary operator  $\varpi \left( k\right) $ which intertwines $\rho _{\lambda }$ and $\rho^{k} _{\lambda }$. This gives rise to a non projective representation $\varpi _{\lambda }$ (see Theorem 2.3 in \cite{BJR2}) of $K_{\rho_{\lambda}}$ called the \textit{metaplectic }representation.

For fixed $\lambda \in \mathfrak{g}^{\ast}$, let $\mathcal{H}_{\lambda }=\oplus _{j\in \Lambda }W_{\lambda
	,j}$ be the decomposition of $\varpi _{\lambda }$ into irreducible $%
K_{\rho_{\lambda}}$-modules. For $j \in \Lambda$, let $d_j=dim(W_{\lambda,j})$ and let $\{v_l^{j} \}_{l=1}^{d_j}$ be an orthonormal basis of $W_{\lambda,j}$. We define

\begin{equation}\label{numerito}
\psi_{{\lambda},j}(n) = \sum_{l=1}^{d_j} \langle \rho_{\lambda}(n) v_l^{j}, v_l^{j} \rangle.
\end{equation}

There is an action of $K$ on $\mathfrak{g}^{\ast }$ defined by $\left( k\cdot\lambda \right) \left( X\right) =\lambda \left( k^{-1} \cdot X\right)$, then $K_{\rho_{\lambda}}=\{k \in K \, | \, k \cdot \lambda = \lambda \}$. Moreover, if $X_{\lambda }$ is the vector in $\mathfrak{g}$ such that $\lambda \left(X\right) =\left\langle X,X_{\lambda }\right\rangle $ for all $X\in \mathfrak{%
	g}$, clearly we have that $K_{\rho_{\lambda}}$ coincides with the stabilizer of $X_{\lambda}$, $K_{\lambda }:=\left\{ k\in K:\, k\cdot X_{\lambda }=X_{\lambda
}\right\} .$  Identifying $\lambda$ with the corresponding $X_{\lambda}$, we can assume that the Plancherel measure is defined on $\mathfrak{g}$ instead of $\mathfrak{g}^{\ast}$. Also, if we denote by $\mathfrak{g}'
_{r}$ the set of regular elements of $\mathfrak{g}'$, since the complement of $%
\mathfrak{g}'_{r}$ in $\mathfrak{g}'$ has Lebesgue measure zero, we can consider that the Plancherel measure $\nu$ is defined on $\mathfrak{g}'_r \oplus \mathfrak{c}$, where $\mathfrak{c}$ is the center of $\mathfrak{g}$. 
\medskip



If $Y_{\lambda}= \frac{X_{\lambda}}{|X_{\lambda}|}$, let $N_{\lambda }$ be the Heisenberg group with Lie algebra $\mathfrak{n}
_{\lambda } = \mathbb{R} Y_{\lambda } \oplus \, V$ and Lie bracket 
\begin{equation*}
\left[ u,v\right]_{\lambda} =B_{\lambda }\left( u,v\right) Y_{\lambda }, \, u,v\in V.
\end{equation*}
Notice that for $k\in K_{\lambda }$ and $u,v\in V,$ we have $B_{\lambda
}\left( k \cdot u,k \cdot v\right) =\left\langle X_{\lambda },\left[ k \cdot u,k \cdot v\right]
\right\rangle =\left\langle X_{\lambda },k \left[ u,v\right] \right\rangle
=\left\langle k^{-1} X_{\lambda },\left[ u,v\right] \right\rangle =B_{\lambda }\left( u,v\right) .$ Thus $K_{\lambda }$ is contained in the
symplectic group $Sp\left( B_{\lambda }\right) .$

Let $\mathfrak{z}_{\lambda}=Ker(\lambda \mid_{\mathfrak{g}})$. Furthermore, since $\lambda $ restricted to $\mathfrak{z}_{\lambda }$ is
trivial, $\rho _{\lambda }$ is an irreducible representation of $N_{\lambda
}, $ and the metaplectic action of $K_{\rho _{\lambda }}$ coincides with the
metaplectic action of $K_{\lambda }$. Moreover, if $\pi _{s}$ denotes the irreducible representation of $%
N_{\lambda }$ such that $\pi _{s}\left(
t,0\right) =e^{ist}$ realized on the Fock space of holomorphic (resp. antiholomorphic)
functions on $\mathbb{C}^{n}$ which are square integrable with respect to the measure $e^{-\left\vert \lambda \right\vert \frac{\left\vert z\right\vert ^{2}}{2}}$, we have that
\begin{equation*}
\rho _{\lambda }\left( z,0\right) =\rho _{\lambda }\left( \left\langle
z,Y_{\lambda }\right\rangle Y_{\lambda },0\right) =e^{i\left\langle
	z,Y_{\lambda }\right\rangle \lambda \left( Y_{\lambda }\right) } = e^{i|\lambda| \langle z,Y_{\lambda} \rangle},
\end{equation*}%
that is 
\begin{equation*}
\rho _{\lambda }\left( z,0\right) =\pi _{\left\vert \lambda \right\vert
}\left( \left\langle z,Y_{\lambda }\right\rangle ,0\right).
\end{equation*}
Therefore $$\rho _{|\lambda| }\left( z,v\right) =\pi _{|\lambda|}\left( \left\langle z,Y_{\lambda }\right\rangle ,v\right).$$\\

\section{The main result}

Given the non degenerate form $B_{\lambda }\left( u,v\right) =\lambda
\left( \left[ u,v\right] \right) ,$ the Pfaffian $Pf\left( B_{\lambda} \mid_{V \times V} \right) $
is defined as the square root of the determinant of  $B_{\lambda
}\mid_{V \times V}$. Let $P(\lambda):=Pf(B_{\lambda} \mid_{V \times V})$. This function $P$ only depends of $\lambda\mid_{\mathfrak{g}}$ and so there is a homogeneous polynomial function on $\mathfrak{g}^{\ast}$, which we also denote by $P$, such that $P(\lambda)=P(\lambda\mid_{\mathfrak{g}})$ (see \cite{Wo}, page 333).

According to Moore-Wolf's theory we have that if $f$ is a Schwartz function and $n\in N$, then
\begin{equation*}
f\left( n\right) =c\int_{\mathfrak{g}^{\ast }}tr\left( \rho _{\lambda
}\left( f\right) \rho _{\lambda }\left( n\right) \right) \left\vert P\left(
\lambda \right) \right\vert d\lambda,
\end{equation*}
where $c$ is a specific constant (\cite{Wo}, page 334).

We can decompose the metaplectic action of $K_{\lambda }$ on the Fock space as
\begin{equation*}
\mathcal{F}_{\lambda }=\bigoplus _{j\in \Lambda }W_{\lambda ,j},
\end{equation*}
to obtain the function $\psi_{\lambda,j}$ defined as in \eqref{numerito}. By straightforward computation, it is easy to see that 
\begin{equation} \label{estrella}
f\left( n\right) =c\sum_{j\in \Lambda }\int_{\mathfrak{g}^{\ast }}f\ast \psi_{\lambda,j}(n)
 \left\vert
P\left( \lambda \right) \right\vert d\lambda .
\end{equation}
Identifying $\lambda $ with $X_{\lambda },$ we can perform the integral \eqref{estrella} on $%
\mathfrak{g}$ instead of $\mathfrak{g}^{\ast }.$ Furthermore from now on, we will use the notation $P(\lambda)$ (resp. $\rho(\lambda), \, d\lambda$, $\psi_{\lambda,j}$, etc) or $P(X_{\lambda})$ (resp. $\rho(X_{\lambda}), \, dX_{\lambda}$, $\psi_{X_{\lambda},j}$, etc)  interchangeably.

\begin{lema}\label{Ad}
	 If $g\in G'$ and $X_{\lambda }\in \mathfrak{g}$  then $P\left( Ad\left( g\right) X_{\lambda }\right) =P\left(
	X_{\lambda }\right)$. 
\end{lema}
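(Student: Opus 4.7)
The plan is to compute $B_{\mathrm{Ad}(g) X_\lambda}$ directly in terms of $B_{X_\lambda}$ and then apply the standard transformation law for the Pfaffian. First, using the $\mathrm{ad}(\mathfrak{g})$-invariance (equivalently, $\mathrm{Ad}(G')$-invariance) of the inner product on $\mathfrak{g}$, together with the fact that $g\in G'$ acts on $\mathfrak{n}$ as an automorphism via $(\mathrm{Ad}(g),\pi(g))$ (so that $\mathrm{Ad}(g^{-1})[u,v]=[\pi(g^{-1})u,\pi(g^{-1})v]$), I would show that for all $u,v\in V$,
\begin{equation*}
B_{\mathrm{Ad}(g)X_\lambda}(u,v)=\langle \mathrm{Ad}(g)X_\lambda,[u,v]\rangle=\langle X_\lambda,\mathrm{Ad}(g^{-1})[u,v]\rangle=B_{X_\lambda}(\pi(g^{-1})u,\pi(g^{-1})v).
\end{equation*}

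In matrix form, with respect to any orthonormal basis of $V$, this reads $M_{\mathrm{Ad}(g)X_\lambda}=\pi(g^{-1})^{T}M_{X_\lambda}\pi(g^{-1})$. By the standard identity $\mathrm{Pf}(A^{T}BA)=\det(A)\,\mathrm{Pf}(B)$ for skew-symmetric $B$, I conclude that
\begin{equation*}
P(\mathrm{Ad}(g)X_\lambda)=\det(\pi(g^{-1}))\,P(X_\lambda).
\end{equation*}

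Finally, since $\pi(g)$ is an orthogonal transformation on $V$ (because $\langle\cdot,\cdot\rangle_V$ is $\pi$-invariant), its determinant lies in $\{\pm 1\}$. As $G'$ is connected, the continuous map $g\mapsto\det\pi(g)$ is constant equal to its value at the identity, namely $1$. Therefore $P(\mathrm{Ad}(g)X_\lambda)=P(X_\lambda)$, as claimed. I do not expect any serious obstacle; the only subtlety is remembering that $\pi(g)$ (rather than $\mathrm{Ad}(g)$) is what acts on $V$, so that the transformation of $B_\lambda$ is governed by $\pi(g^{-1})$, and invoking connectedness of $G'$ to kill the sign.
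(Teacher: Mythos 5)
Your proof is correct and follows essentially the same route as the paper: the key identity $B_{\mathrm{Ad}(g)X_\lambda}(u,v)=B_{\lambda}(\pi(g^{-1})u,\pi(g^{-1})v)$, obtained from the $\mathrm{Ad}$-invariance of the inner product and the fact that $(\mathrm{Ad}(g),\pi(g))$ is an automorphism of $N$, is exactly the paper's computation. You merely make explicit the concluding Pfaffian/determinant step (using orthogonality of $\pi(g)$ and connectedness of $G'$ to remove the sign), which the paper leaves implicit.
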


\begin{proof}
	Since $B_{\lambda }\left( u,v\right) =\lambda \left( \left[ u,v\right]
	\right) =\left\langle \left[ u,v\right] ,X_{\lambda }\right\rangle ,$ we
	have that 
	\begin{eqnarray*}
	B_{Ad\left( g\right) X_{\lambda }}\left( u,v\right) &=& \left\langle \left[ u,v\right] ,Ad\left( g\right) X_{\lambda} \right\rangle \\
	&=& \left\langle Ad\left( g^{-1}\right) \left[ u,v\right], X_{\lambda} \right\rangle \\
	&=& \left\langle \left[ \pi \left( g^{-1}\right) u,\pi \left(
	g^{-1}\right) v\right] ,X_{\lambda }\right\rangle \\
	&=& \lambda \left( \left[ \pi \left( g^{-1}\right) u,\pi \left( g^{-1}\right) v\right] \right),
	\end{eqnarray*}
 where in the third equality we have used that $\left( Ad\left(
	g\right) ,\pi \left( g\right) \right) $ is an automorphism of $N$. Therefore $P\left( Ad\left( g\right) X_{\lambda }\right) =P\left( X_{\lambda
	}\right)$, as desired.
\end{proof}

\medspace

 Recall that $\mathfrak{g}'_{r}$ denotes the set of regular elements of $\mathfrak{g}'$. Since the complement of $\mathfrak{g}'_{r}$ has Lebesgue measure zero (the complement is a set of
zeros of polynomials),  in \eqref{estrella} we can  integrate on $\mathfrak{g}'_{r}$.\\

Let $T$ be a maximal torus of $G'$ with Lie algebra $\mathfrak{h}$. Denote by  $\mathfrak{g}'_C$  and $\mathfrak{h}_C$ the complexified Lie algebras of $\mathfrak{g}'$ and $\mathfrak{h}$ respectively, and by $\Delta$ the root system corresponding to $(\mathfrak{g}'_C, \mathfrak{h}_C)$. Let $%
\mathfrak{h}_{\mathbb{R}}=i\mathfrak{h}$ and let $\mathfrak{C}$ be a fixed Weyl chamber of $%
\mathfrak{h}_{\mathbb{R}}$. 

Let $\Phi: G' \times \mathfrak{h} \rightarrow \mathfrak{g}'$ be defined by $\Phi(g,H) = Ad(g)H $. This map is surjective since any $X \in \mathfrak{g}'$ is contained in a Cartan subalgebra and two Cartan subalgebras are conjugated by $Ad(g)$, for some $g \in G'$.

Moreover, it is easy to see that  $\Phi :G'\times \mathfrak{h}_{r}\mathfrak{\rightarrow }\mathfrak{g}'_{r}$ is surjective, and  $\Phi :G'/T\times \mathfrak{h}_{r}\mathfrak{%
	\rightarrow }\mathfrak{g}'_{r}$ is well defined and surjective.

Consider $\Phi_{\mathfrak{C}} :G'/T\times \mathfrak{C} \rightarrow \mathfrak{g}'_{r},$ to be the restriction of the function $\Phi$ define as above, that is  $\Phi_{\mathfrak{C}}(gT,iH)=Ad(g)H$ (we will denote by $\Phi_{\mathfrak{C}}=\Phi$ for short). Since $\mathfrak{h}_{r}=\cup i\mathfrak{C}_{j}$, where the union is taken over all of the Weyl chambers, and the Weyl group $W\left( T\right)$ permute them, $\Phi$ is surjective. Let us see that is also injective. Assume that there exists $g\in G',$ and $iH,iH_{1}\in \mathfrak{C}$ such that $Ad\left( g\right) H=H_{1}.$ Thus $H_{1}\in \mathfrak{h} \cap Ad\left(
g\right) \mathfrak{h.}$ Since $H_{1}$ is a regular element we have that $Ad\left(
g\right) \mathfrak{h=h.}$ Then $Ad\left( g\right) $ permutes the Weyl
chambers and $g\in N(T)$, the normalizer of $T$ in $G$. Since $Ad\left( g\right) $ fixes $i\mathfrak{C}, \, gT$ is the identity of $W\left( T\right)$ (see \cite{Wa}, page 76, Theorem 3.10.9).

Thus, the map $\Phi : G'/T \times \mathfrak{C} \rightarrow \mathfrak{g}'_r$ is a diffeomorphism, and a computation shows that det($d\Phi_{(g,iH)}$)$=(-i)^{\# \Delta} \prod_{\alpha \in \Delta} \alpha(H)$, (see \cite{Kn}, pages 547--549).\\

Set $\theta(H):=|det(d\Phi_{(g,H)})|$. By the change of variables we obtain


\begin{equation*}
f\left( n\right) =c\sum_{j\in \Lambda }\int_{\mathfrak{c}} \int_{G'/T}\int_{\mathfrak{C}}f\ast \psi
_{(Ad\left( g\right) H + Z) \, , \, j}\left( n\right) \left\vert P\left(
Ad\left( g \right) H + Z \right) \right\vert \theta(H) \ dH \ d\dot{g} \ dZ_.
\end{equation*}

\medspace

where $d\dot{g}$ denotes the $G'$-invariant measure on $G'/T$, and by Lemma \ref{Ad}

\begin{equation*}
f\left( n\right) =c\sum_{j\in \Lambda }\int_{\mathfrak{c}} \int_{\mathfrak{C}}f\ast \left( \int_{G'/T} \psi_{Ad\left( g \right) (H + Z) \, , \,j}\left( n\right) d\dot{g} \right) \left\vert
P\left( H + Z \right) \right\vert \theta(H) \ dHdZ.
\end{equation*}

Recall that for $k \in K$, $\rho^{k}_{\lambda}$ is the irreducible representation of $N$ corresponding to $k \cdot \lambda$, and thus if $\lambda \mid_{\mathfrak{g}}$ is represented by the vector $H+Z$ then $k \cdot \lambda$ corresponds to $k \cdot (H+Z)$. Since $\left( Ad\left( g\right) ,\pi \left( g\right) \right) $ is an
automorphism of $N,$ we have that $\rho_{Ad(g)(H+Z)} (n) = \rho^{Ad(g)}_{H+Z}(n)  = \rho_{H+Z}(Ad(g) \cdot n)$, so \ $\psi_{Ad(g)(H+Z) \, , \,j} \, (n) = \psi_{H+Z \, , \, j} \, (Ad(g) \cdot n)$.\\

Let $C_c^{\sharp}(N)$ be the algebra of $K$-invariant continuous functions on $N$ with compact support. We say that a $K$-invariant continuous function $\phi$ on $N$ is a \textit{spherical function} if the linear functional $\chi(f):=\int f(x) \phi(x^{-1}) \ dx$ is a non trivial character of $C_c^{\sharp}(N)$. It is well known that the set of bounded spherical functions can be identified with the homomorphisms of the space of the $K$-invariant integral functions on $N$ via the map $$\phi \longrightarrow \chi(f)=\int f(n) \phi(n^{-1}) \ dn.$$ We also have the following result

\begin{lema} \label{centro} 
	\begin{enumerate}[(i)]
 \item If $X_{\lambda } = H + Z$, with $H \in \mathfrak{g}'_{r} , \, H \neq 0$ and $Z \in \mathfrak{c}$, then $K/K_{\lambda
}=G'/T$. Moreover, $$\phi _{\lambda ,j}\left( n\right) :=\int_{G'/T}\psi _{Ad\left(
g\right) (H+Z) \, , \, j}\left( n\right) d\dot{g}$$ is a spherical function of $\left( K,N\right).$
\item If $X_{\lambda} \in \mathfrak{c}$, then $K_{\lambda}=K$. In particular, if $\lambda \in \mathfrak{c}^{\ast}$, $\phi_{\lambda,j}=\psi_{\lambda,j}$.
\end{enumerate}
\end{lema}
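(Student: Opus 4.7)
The plan is to compute $K_\lambda$ explicitly in each case using the product structure $K = G' \times U$, and then identify $\phi_{\lambda,j}$ with the $K$-spherical matrix coefficient produced by the standard construction of spherical functions for Gelfand pairs of nilpotent type.

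For part (i), I would start by observing that $U$ acts trivially on $\mathfrak{g}$ (which is the full center of $\mathfrak{n}$), so for $k=(g,u)\in G'\times U$ and $X_\lambda\in\mathfrak{g}$ the action reduces to $k\cdot X_\lambda = Ad(g)X_\lambda$. Writing $X_\lambda = H+Z$ with $H\in\mathfrak{g}'_r$, $Z\in\mathfrak{c}$, and using that $\mathfrak{c}$ is central in $\mathfrak{g}$ so $Ad(g)Z=Z$, the equation $k\cdot X_\lambda = X_\lambda$ collapses to $Ad(g)H=H$. Since $H$ is regular and, by construction of the Weyl chamber, $H\in\mathfrak{h}$, its centralizer in $G'$ is precisely the maximal torus $T$. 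Therefore $K_\lambda = T\times U$ and
$$K/K_\lambda \;=\; (G'\times U)/(T\times U) \;=\; G'/T.$$

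Next, to see that $\phi_{\lambda,j}$ is a spherical function of $(K,N)$, I would invoke the general theory of \cite{BJR2}: each $K_\lambda$-irreducible summand $W_{\lambda,j}$ of the metaplectic representation $\varpi_\lambda$ extends $\rho_\lambda$ to an irreducible unitary representation $\sigma_{\lambda,j}$ of $K_\lambda\ltimes N$, which when induced up to $K\ltimes N$ is irreducible; the Gelfand pair condition forces its space of $K$-fixed vectors to be one-dimensional, and the corresponding normalized spherical matrix coefficient is exactly the $K$-average of $\psi_{\lambda,j}$. Using the intertwining relation noted earlier, namely $\psi_{Ad(g)(H+Z),\,j}(n) = \psi_{H+Z,\,j}(Ad(g)\cdot n)$, this $K$-average translates into the $G'/T$-integral written in the statement, completing (i).

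For part (ii), if $X_\lambda\in\mathfrak{c}$ then $Ad(g)X_\lambda = X_\lambda$ for all $g\in G'$ (as $\mathfrak{c}$ is central in $\mathfrak{g}$) and $U$ already fixes every element of $\mathfrak{g}$; hence $K_\lambda = K$, the averaging is trivial, and $\phi_{\lambda,j} = \psi_{\lambda,j}$. The main obstacle is really only the first step: one must justify cleanly that the centralizer of the regular element $H$ in $G'$ is exactly $T$ (not merely contains $T$), which uses the standard fact that Cartan subalgebras coincide with centralizers of their regular elements, as was already exploited in the preceding analysis of the diffeomorphism $\Phi:G'/T\times\mathfrak{C}\to\mathfrak{g}'_r$. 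Granted that, the rest of the argument is formal, with the sphericality statement being an application of the Benson--Jenkins--Ratcliff framework rather than a new computation.
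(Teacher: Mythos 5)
Your proposal is correct and follows essentially the same route as the paper: it computes $K_{\lambda}=C_{G'}(X_{\lambda})\times U$, uses regularity of $H$ to identify $C_{G'}(H)$ with the maximal torus so that $K/K_{\lambda}=G'/T$, and then identifies $\phi_{\lambda,j}$ with the Benson--Jenkins--Ratcliff spherical function (Theorem 8.7 of \cite{BJR1}) via the relation $\psi_{Ad(g)(H+Z),\,j}(n)=\psi_{H+Z,\,j}(Ad(g)\cdot n)$, exactly as in the paper. The only cosmetic difference is that you sketch the Mackey-machine justification behind that theorem (and cite \cite{BJR2} rather than \cite{BJR1}), whereas the paper simply quotes it.
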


\begin{proof}
\begin{enumerate}[(i)]
\item Let $$C_{G'}\left( X_{\lambda }\right) = \left\{ g\in G':Ad\left( g\right) X_{\lambda }=X_{\lambda }\right\}$$ be the centralizer of $X_{\lambda }$
in $G'$.
Since $U$ acts on $%
\mathfrak{g}$ by the identity, $K_{\lambda }=C_{G'}\left( X_{\lambda
}\right) \times U$ and since $H$ is a regular element,
$C_{G'}\left( X_{\lambda }\right) = C_{G'}\left( H \right)  $ is a maximal torus of $G'.$

 The description of the bounded spherical functions of a Gelfand pair $(K,N)$ is given in Theorem 8.7 in \cite{BJR1}. Indeed, let $(\rho, \mathcal{H}_{\lambda}) \in \widehat{N}$, let $\mathcal{H}_{\lambda} = \oplus_{j \in \Lambda} W_{j,\lambda}$ be the decomposition of the metaplectic representation of $K_{\rho_{\lambda}}$ into irreducible components and let $\{v_1, \cdots, v_d\}$ be an orthonormal basis of $W_{\lambda,j}$. Then the proof of Theorem 8.7 shows that the spherical functions are given by
\begin{equation} \label{esfericas}
\phi_{\lambda,j}(n) = \int_{K/K_{\rho_{\lambda}}} \sum_{l=1}^{d_j} \langle \rho_{\lambda}(\dot{k} \cdot n) v_l, v_l \rangle \, d \dot{k},
\end{equation}
where $\dot{k}$ denotes the $K$-invariant measure on $K/K_{\rho_{\lambda}}$.

In our case $K_{\rho_{\lambda}}=K_{\lambda}$, $K/K_{\lambda}=G'/T$ \ and $$\int_{G'/T} \psi_{Ad(g)(H+Z), \, j} (n) \, d\dot{g} = \int_{G'/T} \psi_{H+Z \, , \, j}(Ad(g) \cdot n) \, d\dot{g} 
= \int_{K/K_{\lambda}} \psi_{H+Z \, , \, j} (k \cdot n) \, d \dot{k},$$
which implies the assertion $(i)$.\\

\item As above, $K_{\lambda} = C_{G'}(X_{\lambda}) \times U$, and since $X_{\lambda} \in \mathfrak{c}$, $C_{G'}(X_{\lambda})=G'$, then $K_{\lambda}=K$.
\end{enumerate}
\end{proof}

We denote by $V_C$ the complexification of $V$ and by $(\pi_C, V_C)$ the extension of $\pi$ to $\mathfrak{g}_C$. Let $V_C = \oplus_r W_r$ the decomposition into irreducible representations and $W_r = \oplus_{j}W^{\nu_r^{j}}$ the weight space decomposition, that is
$$W^{\nu_r^{j}}:=\{v \in W_r \ | \ \pi_C(H)(v) = \nu_r^{j}(H)v \ \text{for all } H \in \mathfrak{h}_C \}.$$
Then, for $H \in \mathfrak{C}$, we have $$P(H)=\prod_{r,j} |\nu_r^{j}|^{m_r^{j}/2},$$ where $m_r^{j}$ is the dimension of $W_r^{j}$.
Let $\zeta_r$ be the central character of $\pi_C \mid_{W_r}$. Also, for $Z \in \mathfrak{c}$ and $H \in \mathfrak{C}$, we have 
\begin{equation}\label{pfaffiano}
P(H+Z)=\prod_{r,j} \nu_r^{j}(H) + \zeta_r(Z)|^{m_r^{j}/2}
\end{equation}

Hence, we have proved our main result:
\begin{teo} \label{teo} Let $f$ be a Schwartz function on $N$. Then
\begin{equation*}
f\left( n\right) =c\sum_{j\in \Lambda }\int_{\mathfrak{c}} \int_{C}f\ast \phi _{\lambda
,j}\left( n\right) \left\vert P\left( H+ Z \right) \right\vert \, \theta(H) \, dH \, dZ,
\end{equation*}
where $\phi_{\lambda,j}$ is the spherical function defined as in \eqref{esfericas} and the function $P$ is as \eqref{pfaffiano}. The support of the Plancherel measure $\nu$ is $\Lambda \times \mathfrak{C} \times \mathfrak{c}$, and $\nu$ is given by the product of the counting measure and $d\mu(\lambda)=|P(H+Z)| \, \theta(H) \, dH \, dZ$. 
\end{teo}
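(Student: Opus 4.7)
The plan is to assemble the theorem by chaining together the ingredients already developed in the excerpt: the Moore--Wolf inversion, the metaplectic decomposition that produces the functions $\psi_{\lambda,j}$, the Weyl-type change of variables based on $\Phi$, Lemma \ref{Ad}, and Lemma \ref{centro}. Almost all of the computation has been done above the statement, so the proof will amount to justifying the order of operations and identifying the inner $G'/T$-average with a spherical function.

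First I would start from \eqref{estrella}, interpret the integration variable $\lambda$ as the vector $X_\lambda = H + Z \in \mathfrak{g}'_r \oplus \mathfrak{c}$, and transport the integral over $\mathfrak{g}'_r$ to $G'/T \times \mathfrak{C}$ via the diffeomorphism $\Phi$; this contributes the Jacobian factor $\theta(H)$. Next I would apply Lemma \ref{Ad} to replace $|P(Ad(g)(H+Z))|$ by $|P(H+Z)|$, pulling the $G'/T$-integration past the (constant in $g$) factors $|P(H+Z)|\theta(H)$. Linearity of convolution then lets me absorb the $d\dot g$ integration inside the convolution, using the identity $\psi_{Ad(g)(H+Z),j}(n) = \psi_{H+Z,j}(Ad(g)\cdot n)$ established just before the statement, to write the convolution kernel as
\begin{equation*}
\int_{G'/T} \psi_{Ad(g)(H+Z),j}(n)\, d\dot g.
\end{equation*}
By part (i) of Lemma \ref{centro} this is exactly $\phi_{\lambda,j}(n)$, the spherical function attached to the pair $(K,N)$ via \eqref{esfericas}, which yields the displayed inversion formula.

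The Plancherel statement is then a reinterpretation: the formula expresses $f(n)$ as the inverse spherical transform against a measure on $\Lambda \times \mathfrak{C} \times \mathfrak{c}$ whose fibers over $\Lambda$ carry the density $|P(H+Z)|\,\theta(H)\,dH\,dZ$, so uniqueness of the Plancherel measure for Schwartz functions identifies $\nu$ as the product of counting measure on $\Lambda$ with $d\mu(\lambda)$. Finally, the polynomial $P(H+Z)$ is evaluated explicitly through the weight decomposition of $(\pi_C, V_C)$ and the central characters $\zeta_r$, giving the product formula \eqref{pfaffiano}.

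The only real obstacle is bookkeeping rather than substance: one must check that the interchange of the convolution with the $G'/T$-integral is legitimate (which follows from Fubini for Schwartz $f$ and the rapid decay of the matrix coefficients defining $\psi_{\lambda,j}$), and that the change of variables via $\Phi$ is compatible with the Plancherel density, i.e.\ that $|P(Ad(g)H + Z)|\,\theta(H)$ really is the pushforward of $|P(\lambda)|\,d\lambda$ under $\Phi^{-1}$. Both points are handled by Lemma \ref{Ad} and the Jacobian calculation preceding the statement, so the proof reduces to writing the chain of equalities in the correct order.
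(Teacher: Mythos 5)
Your proposal is correct and follows essentially the same route as the paper: starting from the Moore--Wolf inversion formula \eqref{estrella}, changing variables via the diffeomorphism $\Phi$ with Jacobian $\theta(H)$, invoking Lemma \ref{Ad} and the identity $\psi_{Ad(g)(H+Z),j}(n)=\psi_{H+Z,j}(Ad(g)\cdot n)$, and identifying the $G'/T$-average with the spherical function $\phi_{\lambda,j}$ through Lemma \ref{centro}(i), exactly as the text does in the computations preceding the theorem. Your explicit remark about justifying the interchange of convolution and the $G'/T$-integral is a point the paper leaves implicit, but it does not change the argument.
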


We write $\lambda = \lambda' + \lambda_0$, with $\lambda' \in [\mathfrak{g}, \mathfrak{g}]^{\ast}$, and $\lambda_0 \in \mathfrak{c}^{\ast}$. As a consequence of the previous result we obtain the decomposition of the regular action on $L^{2}(N)$.

\begin{teo} \label{teonil} 
	
	Let $\mathfrak{g}$ be any compact Lie algebra that appears in the list $A$. Then the regular action of $K \ltimes N$ on $L^{2}(N)$ decomposes as a direct integral of irreducible components by
	
	\begin{equation*}\label{decomposition}
	L^{2}\left( N\right) =\sum_{j\in \Lambda } \int_{\mathfrak{c}} \int _{\mathfrak{C}} \mathcal{H}_{\lambda \, , \, j%
		\text{ }}  \ d\mu \left( \lambda \right),
	\end{equation*}
	where $\mu$ is the measure $\mu \left( \lambda \right) = \left\vert P\left( \lambda \right) \right\vert \theta(\lambda') \, d\lambda $ and $d\lambda$ is the Lebesgue measure in $\mathfrak{c} \times \mathfrak{C}$. Moreover, the projection over $\mathcal{H}_{\lambda,j}$ is $Q_{\lambda,j}(f)=f \ast \phi_{\lambda ,j}$, where $\phi_{\lambda,j}$ is the spherical function given by the following:
	\begin{enumerate}[(i)]
		\item \ If $\lambda' \neq 0$, 
		\begin{equation}\label{esffunc}
		\phi _{\lambda ,j}\left( n\right) =\int_{G'/T } \psi_{\lambda,j}(\dot{g}\cdot n) \, d\dot{g},
		\end{equation}
		where $g \cdot n$ denotes the action of $G'$ by automorphism on $N$, $d \dot{g}$ is the $G'$-invariant measure on $G'/T$ and $\psi_{\lambda.j}$ is as in \eqref{numerito}. 
		
		\medskip
		
		\item \ In the case VII, $\mathfrak{g}= \mathbb{R}$, and $\phi_{\lambda ,j}= \psi_{\lambda,j}$ with $\psi_{\lambda,j}$ as in \eqref{numerito}.\\
		
		If $\lambda' = 0$ and $\mathfrak{g}$ belongs to the case VIII with $k \geq 1$ and $n=0$, case IX and case X with $k_j \geq 1$ and $n_j=0$ for all $1 \leq j \leq \alpha$, then $\phi_{\lambda ,j}= \psi_{\lambda,j}$ with $\psi_{\lambda,j}$ as in \eqref{numerito}.\\
		
		In the other cases, the Plancherel measure vanishes on $\mathfrak{c}$.
		
	\end{enumerate}
\end{teo}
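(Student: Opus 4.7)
My plan is to deduce the direct integral decomposition of $L^2(N)$ from the pointwise inversion formula of Theorem \ref{teo} by a polarization/Parseval argument, then identify the fibres and finally verify the case-by-case statement about when the Plancherel measure charges $\mathfrak{c}$.

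First I would apply the inversion formula of Theorem \ref{teo} to $g^{\ast} \ast f$ evaluated at the identity, where $g^{\ast}(n) = \overline{g(n^{-1})}$. Since each spherical function $\phi_{\lambda,j}$ of the Gelfand pair $(K,N)$ satisfies $\phi_{\lambda,j}^{\ast} = \phi_{\lambda,j}$ (it is real-valued and $K$-invariant, being an average of diagonal matrix coefficients), this yields the Parseval-type identity
\begin{equation*}
\langle f, g \rangle_{L^{2}(N)} = c \sum_{j \in \Lambda} \int_{\mathfrak{c}} \int_{\mathfrak{C}} \langle f \ast \phi_{\lambda,j},\, g \rangle\, |P(H+Z)|\, \theta(H)\, dH\, dZ.
\end{equation*}
From this identity, taken on Schwartz functions and extended by density, I read off that $Q_{\lambda,j}(f) := f \ast \phi_{\lambda,j}$ disintegrates as a field of orthogonal projections with respect to $d\mu(\lambda) = |P(\lambda)|\theta(\lambda')\,d\lambda$. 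The $K \ltimes N$-invariance of the fibre $\mathcal{H}_{\lambda,j}$ is automatic: right convolution by $\phi_{\lambda,j}$ commutes with left translations by $N$, while the $K$-invariance of $\phi_{\lambda,j}$ built into formula \eqref{esfericas} by averaging over $K/K_{\lambda}$ yields $K$-equivariance.

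Next I would read off the explicit shape of $\phi_{\lambda,j}$ from Lemma \ref{centro}. When $\lambda' \neq 0$, part (i), combined with the fact that regularity of $H$ forces $C_{G'}(H) = T$ and hence $K_{\lambda} = T \times U$, identifies $K/K_{\lambda}$ with $G'/T$ and recovers formula \eqref{esffunc}. When $\lambda' = 0$, part (ii) gives $K_{\lambda} = K$, which trivialises the averaging in \eqref{esfericas} and leaves $\phi_{\lambda,j} = \psi_{\lambda,j}$. What remains is to determine for which families of list $A$ the Plancherel measure actually assigns mass to $\mathfrak{c}$; by the factorisation \eqref{pfaffiano} this reduces to deciding whether $P(Z) \neq 0$ for generic $Z \in \mathfrak{c}$, equivalently whether every irreducible summand $W_r$ of $V_{\mathbb{C}}$ carries a non-trivial central character $\zeta_r$.

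Walking through list $A$: case VII is automatic since $\mathfrak{g} = \mathbb{R}$ is entirely central; in case IX the central $\mathbb{S}^{1}$ acts by nontrivial scalars on every irreducible summand of $V_{\mathbb{C}}$; in case VIII with $n = 0$ and in case X with $n_j = 0$ for all $j$, the hypotheses force each central line $\mathfrak{c}_i$ to act non-trivially on its corresponding summand of $V$, so again $P|_{\mathfrak{c}} \not\equiv 0$. In all remaining entries of list $A$ some irreducible summand of $V$ has trivial central action, which by \eqref{pfaffiano} forces $P|_{\mathfrak{c}} \equiv 0$, and hence the Plancherel measure vanishes on $\mathfrak{c}$. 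The main obstacle is exactly this last verification: one must unwind the central characters $\zeta_r$ family by family---case X being the most delicate, since $\mathfrak{c}$ is a direct sum of several lines $\mathfrak{c}_i$ each paired to its own $V$-summand, and one has to check that the stated vanishing conditions on the $n_j$ coincide precisely with non-degeneracy of $B_Z|_{V \times V}$ for generic $Z \in \mathfrak{c}$.
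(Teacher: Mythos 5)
Your overall route is the same as the paper's: deduce the $L^{2}$-decomposition from the inversion formula of Theorem \ref{teo}, identify the projections as convolution against the spherical functions of Lemma \ref{centro} (with $K/K_{\lambda}=G'/T$ when $\lambda'\neq 0$ and $K_{\lambda}=K$ when $\lambda'=0$), and then decide case by case whether the central parameters carry Plancherel mass. The differences are minor and mostly cosmetic: you make the polarization/Parseval step explicit where the paper simply says ``(i) follows from Theorem \ref{teo}''; and you test the vanishing on $\mathfrak{c}$ through the factorization \eqref{pfaffiano} of the Pfaffian (nontriviality of all central characters $\zeta_{r}$), whereas the paper argues via degeneracy of $B_{\lambda}$ for central $\lambda$ (a vector in the kernel of $\pi(it)$) and cites Theorem 14.2.10 of Wolf -- the two criteria are equivalent, and your case-by-case conclusions (VII, IX, VIII with $n=0$, X with all $n_{j}=0$ versus the rest) agree with the paper's.

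Two points deserve attention. First, the theorem asserts a direct integral of \emph{irreducible} $K\ltimes N$-modules, and your argument only yields that each $\mathcal{H}_{\lambda,j}$ is an invariant subspace onto which $f\mapsto f\ast\phi_{\lambda,j}$ projects; irreducibility does not follow from invariance alone. The paper secures it in the Heisenberg case by the orthogonality relations for the matrix coefficients $e_{\lambda}(h_{\alpha},h_{\beta})$ (Theorem \ref{teoHeis}), and in general it rests on the Mackey-theoretic fact that $\sigma_{j}'\otimes\varpi_{j}\pi_{\lambda}$ is an irreducible representation of $K\ltimes N$ whose (one-dimensional) space of $K$-fixed vectors produces exactly $\phi_{\lambda,j}$; you would need to add this identification. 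Second, you fold case VII into the general scheme, but the derivation of Theorem \ref{teo} uses the diffeomorphism $G'/T\times\mathfrak{C}\to\mathfrak{g}'_{r}$ and regularity in $\mathfrak{g}'$, which degenerates when $\mathfrak{g}'=0$; this is precisely why the paper treats the Heisenberg case separately in Section 4. Your treatment can be made to work (the averaging over $G'/T$ is trivial there), but as written it leans on a theorem whose proof does not formally cover that case, and it also loses the finer information of Theorem \ref{teoHeis} (the fibres being primary, equivalent to $(\dim W_{j})\mathcal{F}_{\lambda}$ as $H_{n}$-modules).
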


\begin{proof} 	
\begin{enumerate}[(i)]
		\item It follows from Theorem \eqref{teo}.\\

		\item 	In the cases I, III, IV ,V and VI with $n$ even $\mathfrak{g}$ is semisimple and it has trivial center. 
		
		In the case IX $ \mathfrak{g}= \mathfrak{u}(n)= \mathfrak{su}(n) \oplus i\mathbb{R}, V=\mathbb{C}^{n}, \, n \geq 3$, where $\mathbb{C}^{n}$ denotes the standard representation of $\mathfrak{u}(n)$. Since $Ker(\pi(X_{\lambda}))$ is trivial for all $X_{\lambda} \in \mathfrak{u}(n)$, it follows that $B_{\lambda}$ is non degenerate. Then, the Plancherel measure is concentrated in  $\mathfrak{g}= i\mathbb{R} \oplus [\mathfrak{g},\mathfrak{g}]$. The expression of the spherical functions follows from Lemma \ref{centro} (ii).
		
		In the case VIII with $k \geq 1$, $ n=0$, and case X with $k_j \geq 1$, $n_j =0$ for all $1 \leq j \leq \alpha$ the analysis is similar to the case IX since $\pi$ has trivial kernel.

		In the case VIII \ with $k \geq 1, n > 0$, $\mathfrak{g}=\mathfrak{u}(2)= \mathfrak{su}(2) \oplus i \mathbb{R}$, $V=(\mathbb{C}^{2})^{k} \oplus (\mathbb{C}^{2})^{n}$. The center of $\mathfrak{u}(2)$ acts non-trivially only on $(\mathbb{C}^{2})^{k}$, in fact, $\mathfrak{su}(2)$ acts on $(\mathbb{C}^{2})^{n}$ as Im($\mathbb{H}$) acts component-wise on $\mathbb{H}^{n}$ by quaternion product on the left side. Thus, if $t \in \mathbb{R}$, $\pi(it)(0,v)=(0,0)$ for all $v \in (\mathbb{C }^{2})^{n}$, that is, $(0,v) \in Ker(\pi(it))$. For \eqref{corchete2} and \eqref{corchetes} it follows that $B_{it}$ is degenerate for all $it \in i \mathbb{R}$. Then, by Theorem 14.2.10 in \cite{Wo}, the Plancherel measure is concentrated in $\mathfrak{g}'=[\mathfrak{g},\mathfrak{g}]$.
		
		In the case X with $k_j \geq 1$ for all $1 \leq j \leq \alpha$ and $n_{j_0} > 0$ for some  $1 \leq j_0 \leq \alpha$ the analysis is similar to the case VIII with $n>0$.
\end{enumerate}
        The case VII corresponds to the Heisenberg group, and it is proved in section 4, Theorem \ref{teoHeis}.
\end{proof}

 \section{The Heisenberg case}

 We take $\mathfrak{g}=\mathbb{R}$ and $V=\mathbb{C}^{n}$ with the standard
 Hermitian form $\left( u,v\right) =Re(\sum_{i=1}^{n}u_i\overline{v}_i)$, where $u_i, v_i$ are the coordinates of $u,v \in \mathbb{C}^{n}$ respectively and let $\pi$ defined by $\pi \left(
 t\right) v=itv,$ for $t\in \mathbb{R}$, in this case we have that
 \begin{equation*}
 \left\langle t,\left[ u,v\right] \right\rangle =\left( \pi \left( t\right)
 u,v\right) =t\left( iu,v\right) =-t \, \text{Im}(u.\overline{v}).
 \end{equation*}%
 Thus, the bracket is given by the standard simplectic form and the
 corresponding group $N\left( \mathfrak{g,}V\right) $ is the $(2n+1)$-dimensional Heisenberg group.
 \medskip
 
 It is known that the unitary irreducible representations of $H_n$ are of two types: those of infinite dimension acting non trivially on the center and the characters $\chi_{w}(t,v)=e^{i \text{Re}(v.\bar{w})}$. The unitary irreducible representations of infinite dimension $( \pi
 _{\lambda },\mathcal{F}_{\lambda }) $ of $H_{n}$ are parametrized by $0\neq \lambda \in \mathbb{R}$. More explicitly, for $\lambda >0$ (resp. $\lambda <0),$ they are realized on the Fock space of holomorphic (resp. antiholomorphic)
 functions on $\mathbb{C}^{n}$ which are square integrable with respect to the measure $e^{-\left\vert \lambda \right\vert \frac{\left\vert z\right\vert ^{2}}{2}}$ and they are determined by the central action. We denote by $\mathcal{P}(V)$ the polynomial algebra which is dense in $\mathcal{F}%
 _{\lambda }$.
 
 Let $K\subseteq U\left( n\right) $ such that $\left( K,H_{n}\right) $ is a
 Gelfand pair. For $k\in K$ and $\left( t,v\right) \in H_{n}$, we can define $\pi _{\lambda
 }^{k}\left( t,v\right) :=\pi _{\lambda }\left( \left( t,kv \right) \right) .$
 Since $\pi _{\lambda }\left( t,0\right) =e^{i\lambda t}$, we have that $K=\left\{ k\in K:\pi
 _{\lambda }^{k}\sim \pi _{\lambda }\right\}$. For $p\in \mathcal{P}
 \left( V\right)$ and $k\in K$ we define
 \begin{equation}\label{metaplectica}
 \varpi \left( k\right) p\left( v\right) =p\left( k^{-1}v\right).
 \end{equation}
 Then $\varpi $ extends to a unitary representation of $K$, called the
 metaplectic representation, which intertwines $\pi _{\lambda }^{k}$ and $%
 \pi _{\lambda }.$ According to Mackey's theory, the irreducible unitary representations  of $K\ltimes H_{n}$ are induced by those of $H_n$. 
 
 For $\sigma \in \widehat{K}$, the irreducible representations of $K \ltimes H_n$ induced by $\mathcal{F}_{\lambda}$ are
 defined by $$\rho _{\lambda ,\sigma
 }\left( k,t,v \right) =\sigma \left( k\right) \otimes \varpi \left( k\right)
 \pi _{\lambda }\left( t,v \right), \ \ k \in K, \ (t,v) \in H_n. $$
 Thus $\rho _{\lambda
 	,\sigma }$ has a vector fixed by $K$ if and only if $\sigma $ is the dual
 representation of some irreducible component of $\varpi $. 
 
 Since the other elements of $\widehat{K\ltimes H_{n}}$ are induced by the
 characters of $\mathbb{R}^{2n},$ and $(K\ltimes \mathbb{R}^{2n},K)$ is always a
 Gelfand pair, we have that $\left( K,H_{n}\right) $ is a Gelfand pair if and
 only if $\varpi $ is multiplicity free. 
 
 Let 
 \begin{equation*}
 \varpi \downarrow \mathcal{F}_{\lambda }=\bigoplus _{j\in \Lambda }\varpi _{j}
 \end{equation*}%
 the decomposition of $\varpi $ into irreducible components. We denote by $%
 W_{j}$ the representation space of $\varpi _{j}$ and by $\varpi _{j}^{\prime
 }$ its dual representation.
 
 We select an orthonormal basis $\left\{ h_{1},...,h_{d_{j}}\right\} $ of $%
 W_{j},$ and let $\left\{ h_{1}^{\ast},...,h_{d_{j}}^{\ast}\right\} $  its
 dual basis. It follows immediately that $s_{j}=\sum_{l=1}^{d_{j}}h_{l}
 \otimes h_{l}^{\ast}$ is a vector of $\rho _{\lambda ,\varpi _{j}^{\prime }}$
 fixed by $K.$ In order to simplify the notation, we set $\rho _{\lambda
 	,j}:=\rho _{\lambda ,\varpi _{j}^{\prime }}.$ The spherical function
 corresponding to $s_{j}$ is $\phi _{\lambda ,j}\left( t,v\right)
 =\left\langle \rho _{\lambda ,j}\left( k,t,v\right) s_{j},s_{j}\right\rangle 
 $ and an easy computation gives that
 \begin{equation}\label{traza}
 \phi _{\lambda ,j}\left( t,v\right) =\sum_{i=1}^{d_j} \langle \pi _{\lambda }\left( t,v\right) h_i, h_i \rangle.
 \end{equation}
 
 For $h, h' \in \mathcal{F}_{\lambda}$, let $e_{\lambda }\left( h,h'\right) \left( t,v\right) :=\left\langle \pi
 _{\lambda }\left( t,v\right) h,h' \right\rangle$ the entry matrix of $\pi _{\lambda }$ associated to $h, h'$. It is well known that the functions $v \longmapsto e_{\lambda}(h,h')(0,v) \in L^{2}(\mathbb{C}^{n})$ and for $\lambda \neq \lambda_1$  
 \begin{equation*}
 \int_{\mathbb{C}^{n}}e_{\lambda }\left( h,h'\right) \left( 0,t\right) 
 \overline{e_{\lambda_1}\left( h_1,h_1^{\prime }\right) \left(
 	0,t\right) } dv=0
 \end{equation*}%
 for all $h,h' \in \mathcal{F}_{\lambda}, \, h_1,h_1' \in \mathcal{F}_{\lambda_1}$, and
 \begin{equation*}
 \int_{\mathbb{C}^{n}}e_{\lambda }\left( h_1,h_2\right) \left( t,v\right) 
 \overline{e_{\lambda}\left( h_3,h_4 \right) \left(
 	0,t\right) } dv= \langle h_1,h_3 \rangle \langle h_2,h_4 \rangle
 \end{equation*}%
 for all $h_1,h_2,h_3,h_4 \in \mathcal{F}_{\lambda}$ (see Proposition 1.42 in \cite{Fo} and Theorem 14.2.3 in \cite{Wo}).\\

 For $j\in \Lambda ,$ we select an orthonormal basis $\mathcal{B}_{j}$ of $%
 W_{j}$, hence $\mathcal{B=\cup }_{j}\mathcal{B}_{j}$ is an orthonormal basis
 of $\mathcal{F}_{\lambda }.$
 
 Recall that the convolution is defined for integrable functions on $H_n$ by
 $$\left(f \ast g \right) (x) = \int_{H_n} f(y) g(y^{-1}x) \ dy,$$ where $x \in H_n$.

 \begin{teo} \label{teoHeis}
 	Let $H_{\lambda ,j}$ be the Hilbert
 	space generated by $\left\{ e_{\lambda}\left( h_{\alpha },h_{\beta }\right) \right\}$ where $h_{\alpha }\in \mathcal{B}_{j}$ and $h_{\beta }\in \mathcal{B}$, with inner product $
 	\left\langle \varphi ,\psi \right\rangle _{\lambda ,j}:=\int_{\mathbb{C}^{n}}\varphi \left( 0,v\right) \overline{\psi \left( 0,v\right) }dv.$ Then, the regular action of $K\ltimes N$ decomposes as a direct
 	integral of irreducible components by
 	\begin{equation*}
 	L^{2}\left( H_{n}\right) =\sum_{j\in \Lambda }\int_{-\infty }^{\infty
 	}H_{\lambda ,j}\left\vert \lambda \right\vert ^{n}d\lambda \mathit{.}
 	\end{equation*}
 	Moreover, the Hilbert space $H_{\lambda,j}$ is primary and equivalent to $\left( \dim W_{j}\right) F_{\lambda }$ as  $H_{n}$-module.
 \end{teo}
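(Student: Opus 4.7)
The plan is to combine the Plancherel inversion formula for $H_n$ with the metaplectic decomposition $\mathcal{F}_\lambda = \bigoplus_{j \in \Lambda} W_j$, and then use the orthogonality relations of matrix coefficients (quoted above from Folland and Wolf) to identify the fibers of the resulting direct integral.

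First, I would specialize Theorem~\ref{teo} to the Heisenberg case: here $\mathfrak{g} = \mathbb{R}$ is abelian, $\mathfrak{g}' = 0$, and $P(\lambda) = |\lambda|^n$, so the inversion formula collapses to
$$f(n) = c \int_{-\infty}^{\infty} \tr(\pi_\lambda(f)\, \pi_\lambda(n))\, |\lambda|^n \, d\lambda.$$
Expanding the trace in the orthonormal basis $\mathcal{B} = \bigcup_j \mathcal{B}_j$ of $\mathcal{F}_\lambda$ and regrouping the sum over $j$, one obtains
$$f(n) = c \int_{-\infty}^{\infty} \sum_{j \in \Lambda}\sum_{h_\alpha \in \mathcal{B}_j}\sum_{h_\beta \in \mathcal{B}} \langle\pi_\lambda(f) h_\beta, h_\alpha\rangle\, e_\lambda(h_\alpha, h_\beta)(n)\, |\lambda|^n\, d\lambda,$$
which exhibits $f$ as a superposition of matrix coefficients $e_\lambda(h_\alpha, h_\beta)$ with $h_\alpha \in \mathcal{B}_j$ and $h_\beta \in \mathcal{B}$.

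Second, the orthogonality relations stated before the theorem imply that, for fixed $\lambda \neq 0$, the family $\{e_\lambda(h_\alpha, h_\beta)\}$ with $h_\alpha \in \mathcal{B}_j$ and $h_\beta \in \mathcal{B}$ is orthonormal for the inner product $\langle\cdot,\cdot\rangle_{\lambda,j}$, while matrix coefficients corresponding to distinct values of $\lambda$ are mutually orthogonal on the central slice. Consequently $\{e_\lambda(h_\alpha, h_\beta)\}_{h_\alpha \in \mathcal{B}_j,\, h_\beta \in \mathcal{B}}$ is an orthonormal basis of $H_{\lambda,j}$, and the above inversion formula becomes a genuine direct-integral decomposition
$$L^2(H_n) = \sum_{j \in \Lambda} \int_{-\infty}^{\infty} H_{\lambda,j}\, |\lambda|^n\, d\lambda.$$

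Third, to identify the $H_n$-module structure of $H_{\lambda,j}$, I would use $\pi_\lambda((s,w)^{-1}(t,v)) = \pi_\lambda(s,w)^{-1}\pi_\lambda(t,v)$ to compute that the left regular action satisfies
$$L_{(s,w)} e_\lambda(h, h') = e_\lambda(h, \pi_\lambda(s,w) h'),$$
so it acts only on the right entry of the matrix coefficient. For each fixed $h_\alpha \in \mathcal{B}_j$, the closed span of $\{e_\lambda(h_\alpha, h_\beta) : h_\beta \in \mathcal{B}\}$ is thus $H_n$-equivalent to $\mathcal{F}_\lambda$; since $|\mathcal{B}_j| = \dim W_j$, the whole of $H_{\lambda,j}$ is primary, equivalent to $(\dim W_j)\, \mathcal{F}_\lambda$ as an $H_n$-module. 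The main obstacle will be the careful bookkeeping of normalizations: the Plancherel constant $c$, the $|\lambda|$-dependent normalization implicit in the orthogonality relations for $\pi_\lambda$, and the weight $|\lambda|^n\, d\lambda$ appearing in the statement must all be tracked consistently so that $\langle\cdot,\cdot\rangle_{\lambda,j}$ on $H_{\lambda,j}$ coincides with the inner product induced by the direct-integral decomposition of $L^2(H_n)$.
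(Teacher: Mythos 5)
Your proposal is correct and follows essentially the same route as the paper: the scalar inversion/Plancherel formula for $H_n$, expansion of the trace in the basis $\mathcal{B}=\cup_{j}\mathcal{B}_{j}$, and the orthogonality relations for the matrix entries $e_{\lambda}(h_{\alpha},h_{\beta})$ to identify the fibers $H_{\lambda,j}$ and their $H_n$-module type $(\dim W_{j})\mathcal{F}_{\lambda}$. The paper packages your orthogonality step as the explicit identity \eqref{formula1}, $f\ast e_{\lambda}(h_{\alpha},h_{\alpha})=\sum_{h_{\beta}\in\mathcal{B}}\langle f,e_{\lambda}(h_{\alpha},h_{\beta})\rangle\, e_{\lambda}(h_{\alpha},h_{\beta})$, which in addition exhibits the projection onto $H_{\lambda,j}$ as convolution with the spherical function $\phi_{\lambda,j}$ and yields the Parseval identity, but the substance of the argument is the same.
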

 
 \begin{proof}
 	The inversion formula for a Schwartz function $f$ on $H_{n}$ is given by
 	\begin{eqnarray*}
 		&& \\
 		f\left( t,v\right) &=&\int_{-\infty }^{\infty }tr\left( \pi _{\lambda
 		}\left( t, v\right) \pi _{\lambda }\left( f\right) \right) \left\vert
 		\lambda \right\vert ^{n}d\lambda.
 	\end{eqnarray*}
 	Moreover, 
 	\begin{equation*} 
 	\left\Vert f\right\Vert ^{2}=\int_{-\infty }^{\infty }\left\Vert \pi
 	_{\lambda }\left( f\right) \right\Vert _{HS}^{2}\left\vert \lambda
 	\right\vert ^{n}d\lambda =\sum \int_{-\infty }^{\infty }\left\vert
 	\left\langle f,e_{\lambda}\left( h_{\alpha },h_{\beta }\right) \right\rangle
 	\right\vert ^{2}\left\vert \lambda \right\vert ^{n}d\lambda
 	\end{equation*}
 	where $\left\Vert \cdot \right\Vert _{HS}$ denotes the Hilbert-Schmidt norm, and the
 	sum runs on $h_{\alpha },h_{\beta }\in \mathcal{B}$. Notice that $\left\langle \pi _{\lambda }\left( t,v\right) \pi _{\lambda }\left( f\right) h,h'\right\rangle =\left( f\ast e_{\lambda }\left( h,h'\right)
 	\right) \left( t,v\right)$, then
 	\begin{eqnarray*}
 		f\left( t,v\right) &=&\sum_{j\in \Lambda }\int_{-\infty }^{\infty
 		}\sum_{h_{\alpha }\in \mathcal{B}_{j}}\left( f\ast e_{\lambda }\left(
 		h_{\alpha },h_{\alpha }\right) \right) \left( t,v\right) \left\vert \lambda
 		\right\vert ^{n}d\lambda\\
 		&=&\sum_{j\in \Lambda }\int_{-\infty }^{\infty }\left( f\ast \phi _{\lambda
 			,j}\right) \left( t,v\right) \left\vert \lambda \right\vert ^{n}d\lambda
 	\end{eqnarray*}
 	where the last equality follows from \eqref{traza}. By a straightforward computation we obtain that
 	\begin{equation} \label{formula1}
 	f\ast e_{\lambda }\left( h_{\alpha },h_{\alpha }\right) =\sum_{h_{\beta }\in 
 		\mathcal{B}}\left\langle f,e_{\lambda}\left( h_{\alpha },h_{\beta }\right)
 	\right\rangle _{L^{2}\left( H_{n}\right) }e_{\lambda}\left( h_{\alpha },h_{\beta
 	}\right).
 	\end{equation}
 	By \eqref{formula1} it follows that $f\ast e_{\lambda }\left( h_{\alpha },h_{\alpha }\right)
 	\in H_{\lambda ,j}$ and $||f\ast e_{\lambda }( h_{\alpha },h_{\alpha })|| _{\lambda ,j}^{2}=\sum_{h_{\beta }\in \mathcal{B}} |
 	\left\langle f,e_{\lambda}( h_{\alpha },h_{\beta }) \right\rangle | ^{2}$, for all $h_{\alpha
 	}\in \mathcal{B}_{j}$. Hence, we obtain that the orthogonal projection $Q_{\lambda ,j}\left( f\right) =f\ast \phi _{\lambda ,j}$ maps $L^{2}\left(
 	H_{n}\right) $ onto $H_{\lambda ,j},$ $H_{\lambda ,j}$ is irreducible, and
 	by \eqref{formula1} $\left\Vert f\right\Vert ^{2}=\sum_{j\in \Lambda }\int_{-\infty
 	}^{\infty }\left\Vert Q_{\lambda ,j}f\right\Vert _{\lambda ,j}^{2}\left\vert
 	\lambda \right\vert ^{n}d\lambda .$ This concludes the proof of the theorem.
 \end{proof}

\section{Description of $\phi _{\lambda,j}.$}

In this section we describe the set $\mathfrak{B}$ of spherical functions corresponding
to the set of generic (or with full Plancherel measure) representations of $%
N\left( \mathfrak{g},V\right) .$ These computations involve integration on $%
G/T,$ which is difficult to carry out with exception of a few cases. Nevertheless, we obtain a parametrization of $\mathfrak{B.}$

As we saw before the metaplectic representation $\varpi_{\lambda}$ of $K_{\lambda}$ is given by \eqref{metaplectica}. We assume that it decomposes into irreducible components as $\mathcal{P}(V) = \oplus_{j \in \Lambda} W_{\lambda,j}$. We also saw that 
\begin{equation*}
\rho _{\lambda }\left( z,v\right) =e^{i\left\vert \lambda \right\vert
	\left\langle z,Y_{\lambda }\right\rangle }\pi _{\left\vert \lambda
	\right\vert }\left( 0,v\right) .
\end{equation*}
The set of spherical functions of $(K_{\lambda},H_n)$ corresponding to the Fock representations $\pi_{|\lambda|}$ is given by $\{\psi_{\lambda, j} \}_{j \in \mathbb{N}\cup \{0\}}$, where $\psi_{\lambda,j}$ is described in \eqref{numerito}.
As
\begin{eqnarray*}
 \rho _{\lambda }\left( Ad\left( g\right) z,\pi \left( g\right)
v\right) &=&\pi _{\left\vert \lambda \right\vert }\left( \left\langle Ad\left(
g\right) z,Y_{\lambda }\right\rangle ,0\right) \pi _{\left\vert \lambda
	\right\vert }\left( 0,\pi \left( g\right) v\right)\\
&=&e^{i\left\vert \lambda
	\right\vert \left\langle z,Ad\left( g^{-1}\right) Y_{\lambda }\right\rangle
}\pi _{\left\vert \lambda \right\vert }\left( 0,\pi \left( g\right) v\right),
\end{eqnarray*}
we obtain that

\begin{equation*}
\int_{G/T} \psi_{\lambda,j}(g \cdot(z,v)) \,
d \dot{g}=\int_{G/T}e^{i\left\vert \lambda \right\vert \left\langle z,Ad\left(
	g^{-1}\right) Y_{\lambda }\right\rangle } \psi_{\lambda,j}(0,\pi(g)v) \, d \dot{g}.
\end{equation*}

 By the description
in \cite{BJR2} of the set of bounded spherical functions of a
Gelfand pair $\left( K,H_{n}\right) $ we know that for $(t,v) \in H_n$ and  $\lambda > 0$
 $$\psi _{
	\lambda ,j}\left( t,v\right) =e^{it \lambda}q_{j}\left( \lambda ^{\frac{1}{2}
}v\right) e^{-\frac{\lambda  }{4}\left\vert
	v\right\vert ^{2}},$$ where $q_{j}$ is a \textit{real} $K$-invariant polynomial. Indeed, assume $\lambda=1$ and
let $\mathcal{P}\left( V\right) ^{\mathbb{R}}$ denote the algebra of 
real $K$-invariant polynomials. Then it is  proved in \cite{BJR2}  that
there is a canonical basis $\left\{ p_{j}\right\} _{j\in \Lambda }$ of the
vector space $\mathcal{P}\left( V\right) ^{\mathbb{R}}$, $p_{j}\in W_{j}:=W_{j,1},$ such that the sequence $\left\{
q_{j}\right\} _{j\in \Lambda }$ is obtained from $\left\{ p_{j}\right\}
_{j\in \Lambda }$by applying the Gram Schmidt process with respect to the
measure $e^{-\frac{1}{4}\left\vert v\right\vert ^{2}}dv$. Thus 
\begin{equation*}
\phi _{\lambda ,j}\left( z,v\right) =e^{-\frac{\left\vert \lambda
		\right\vert }{4}\left\vert v\right\vert ^{2}}\left( \int_{G/T}e^{i\left\vert
	\lambda \right\vert \left\langle z,Ad\left( g^{-1}\right) Y_{\lambda
	}\right\rangle }q_{j}\left( \left\vert \lambda \right\vert ^{\frac{1}{2}}\pi
\left( g\right) v\right) d \dot{g}\right) .
\end{equation*}

\bigskip

\begin{obs}
In the case that $\mathfrak{g}= \mathfrak{c} \, \oplus \, \mathfrak{g}'$, and $Y_{\lambda}= Z_{\lambda} + Y'_{\lambda}, \ Z_{\lambda} \in \mathfrak{c}, \ Y_{\lambda}' \neq 0$, we have that $$\phi_{\lambda,j}(z,v)=e^{i |\lambda| \langle z, Z_{\lambda} \rangle } \phi_{\lambda',j} (z,v),$$
where $|\lambda'| = |\lambda| |Y_{\lambda}'|$.
\end{obs}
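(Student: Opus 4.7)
The plan is to apply the explicit integral formula for $\phi_{\lambda,j}$ derived just before the remark and exploit that the central component $Z_{\lambda}$ is fixed by the adjoint action of $G'$. Concretely, I would start from
\begin{equation*}
\phi_{\lambda,j}(z,v) = e^{-\frac{|\lambda|}{4}|v|^{2}} \int_{G'/T} e^{i|\lambda|\langle z,\, Ad(g^{-1})Y_{\lambda}\rangle}\, q_{j}\bigl(|\lambda|^{1/2}\pi(g)v\bigr)\, d\dot{g},
\end{equation*}
substitute the orthogonal decomposition $Y_{\lambda}=Z_{\lambda}+Y_{\lambda}'$, and observe that $Ad(g^{-1})Z_{\lambda}=Z_{\lambda}$ for every $g\in G'$, since $Z_{\lambda}\in \mathfrak{c}$ lies in the center of $\mathfrak{g}$ and $G'$ acts on $\mathfrak{g}$ via $Ad$, which is trivial on $\mathfrak{c}$. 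This splits the exponential as
\begin{equation*}
e^{i|\lambda|\langle z,\, Ad(g^{-1})Y_{\lambda}\rangle} = e^{i|\lambda|\langle z,Z_{\lambda}\rangle}\cdot e^{i|\lambda|\langle z,\, Ad(g^{-1})Y_{\lambda}'\rangle},
\end{equation*}
and the first factor, being independent of $g$, pulls out of the integral.

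Next I would convert the remaining integrand to the one defining $\phi_{\lambda',j}$. Since $X_{\lambda}=|\lambda|Y_{\lambda}$, projecting to $\mathfrak{g}'$ gives $X_{\lambda'}=|\lambda|Y_{\lambda}'$, whence $|\lambda'|=|\lambda|\,|Y_{\lambda}'|$ and $Y_{\lambda'}=Y_{\lambda}'/|Y_{\lambda}'|$. Therefore
\begin{equation*}
|\lambda|\langle z,\,Ad(g^{-1})Y_{\lambda}'\rangle = |\lambda'|\langle z,\,Ad(g^{-1})Y_{\lambda'}\rangle,
\end{equation*}
so the $g$-dependent part of the exponential is precisely the one appearing in the integral representation of $\phi_{\lambda',j}$. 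Putting these observations together yields the desired factorization
\begin{equation*}
\phi_{\lambda,j}(z,v) = e^{i|\lambda|\langle z,Z_{\lambda}\rangle}\,\phi_{\lambda',j}(z,v),
\end{equation*}
provided that the Gaussian weight and the rescaling of the polynomial argument $q_{j}$ are read off with the convention that the $V$-dependence of the spherical function on the $\mathfrak{g}'$-orbit is the one inherited from the restriction of $\rho_{\lambda}$ to $N_{\lambda'}$.

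The only potential obstacle is this last bookkeeping step, namely matching the Fock scalings $|\lambda|^{1/2}$ and $|\lambda'|^{1/2}$ appearing in $q_{j}$ and in the Gaussian factor $e^{-|\lambda||v|^{2}/4}$. I would handle it by noting that the metaplectic decomposition of $\mathcal{P}(V)$ relevant for $\phi_{\lambda,j}$ is the one associated with $K_{\lambda}=K_{\lambda'}$, so the polynomial $q_{j}$ is the same in both spherical functions; the proper normalization then comes out of the identity $|\lambda'|=|\lambda||Y_{\lambda}'|$ together with $K$-invariance of $q_{j}$. No other ingredient is needed beyond centrality of $Z_{\lambda}$ and the integral formula already established.
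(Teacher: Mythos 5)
Your argument is precisely the paper's own (implicit) justification: the remark is stated without proof immediately after the displayed integral formula, and its whole content is what you use — $Ad(g^{-1})$ fixes $Z_{\lambda}\in\mathfrak{c}$, so $e^{i|\lambda|\langle z,Z_{\lambda}\rangle}$ pulls out of the $G'/T$-integral, and $|\lambda|\,Y_{\lambda}'=|\lambda'|\,Y_{\lambda'}$ identifies the remaining exponent with the one at parameter $|\lambda'|=|\lambda|\,|Y_{\lambda}'|$. The bookkeeping caveat you flag is real, and your closing convention is the correct reading rather than a defect of your proof: the $v$-dependence (the Gaussian and the argument of $q_{j}$) must keep the scale $|\lambda|$ inherited from $\rho_{\lambda}$ and cannot literally be traded for $|\lambda'|$ — already for $j=0$ and $z=0$ the two normalizations give $e^{-\frac{|\lambda|}{4}|v|^{2}}$ versus $e^{-\frac{|\lambda'|}{4}|v|^{2}}$ — an imprecision in the remark's notation that the paper itself passes over silently.
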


In the following, we analize the set $\mathcal{B}$ case by case. We denote by  $T_n$ the $n$-dimensional torus.

\medspace

$\bullet $ \textbf{Case I}. In this case $\mathfrak{g=su}\left( 2\right) ,V=\mathbb{H}%
^{n} $ and $\mathfrak{n}= \mathfrak{su}(2) \oplus \mathbb{H}^{n}$. \ $\mathfrak{su}\left( 2\right) $ is isomorphic to $\text{Im}(\mathbb{H})
$ and the action is given by $q$.$\left( v_{1},...,v_{n}\right) =\left(
qv_{1},...,qv_{n}\right) ,$ for $q\in \text{Im}(\mathbb{H})$, $v=\left(
v_{1},...,v_{n}\right) \in \mathbb{H}^{n}.$

Thus $\mathfrak{n=}$ $\text{Im}(\mathbb{H}) \oplus \mathbb{H}^{n}$ is a Lie algebra of
Heisenberg type$,$ $K=SU\left( 2\right) \times Sp\left( n\right) $ and $%
K_{\lambda }=T_{1}\times Sp\left( n\right) $ where 
\begin{equation}\label{T_1}
T_{1}:=\lbrace{ 
\left(
\begin{array}{lr}
e^{i\theta} & 0 \\
0 & e^{-i\theta} \\

\end{array}
\right)
: \,\theta\in \mathbb{R}
\rbrace}
\end{equation}
 is a maximal torus of $SU\left( 2\right)$. It is well known that the natural action on the space $\mathcal{%
	P}_{j}\left( \mathbb{C}^{2n}\right) $ of homogeneous polynomial of degree $j$ is irreducible and we denote it by $\eta_j$. Then the metaplectic representation of $K_{\lambda}$ acting on $\mathcal{P}\left( \mathbb{C}^{2n}\right) $ decomposes as 
$$\varpi \downarrow
K_{\lambda } = \oplus _{j=0}\chi _{j} \otimes \eta_j,$$
 where $\chi _{j}\left( \theta \right) =e^{-ij\theta }$.
 
 It is also well known that $\psi _{\lambda,j}(t,v) = e^{it\left\vert \lambda \right\vert }L_{j}^{2n-1}\left( 
\frac{\left\vert \lambda \right\vert }{2}\left\vert v\right\vert ^{2}\right)
e^{-\frac{\left\vert \lambda \right\vert }{4}\left\vert v\right\vert ^{2}}$  where $L_j^{2n-1}$ is a Laguerre polynomial of degree $j$, (see \cite{Fo} page 64). Thus, $$\phi _{\lambda ,j}\left( z,v\right) =\int_{SU\left( 2\right) /T_1}\psi
_{\lambda ,j}\left( \left\langle Ad\left(
g^{-1}\right) Y_{\lambda },z\right\rangle ,g.v\right) d\dot{g}.$$
Since $Ad:SU\left( 2\right) \rightarrow SO\left( 3\right) $ is a surjective
morphism with kernel $\pm 1$ and $SO\left( 3\right) /SO\left( 2\right) $ is
homeomorphic to the two dimensional sphere $S^{2},$ we have that
\begin{eqnarray*}
\phi _{\lambda ,j}\left( z,v\right) &=& \int_{SO\left( 3\right) /SO\left(
	2\right) }\psi _{\lambda ,j}\left( \left\langle g^{-1}Y_{\lambda
},z\right\rangle ,g.v\right) d\dot{g} \\
&=&\left( \int_{S^{2}}e^{i\left\vert \lambda \right\vert \xi .z}d\xi \right)
L_{j}^{2n-1}\left( \frac{\left\vert \lambda \right\vert }{2}\left\vert
v\right\vert ^{2}\right) e^{-\frac{\left\vert \lambda \right\vert }{4} \left\vert v\right\vert ^{2}}\\
&=&J_{\frac{1}{2}}\left( \left\vert \lambda
\right\vert z\right) L_{j}^{2n-1}\left( \frac{\left\vert \lambda \right\vert 
}{2}\left\vert v\right\vert ^{2}\right) e^{-\frac{\left\vert \lambda
		\right\vert }{4}\left\vert v\right\vert ^{2}},
\end{eqnarray*}
where $d\xi $ denotes the $SO\left( 3\right) $-invariant measure on $S^{2},$ and $J_{\frac{1}{2}}$ is
the Bessel function of order $\frac{1}{2}$ of the first kind.

\medskip

$\bullet $ \textbf{Case II}. In this case $N$ does not have square integrable representations.

\medskip

$\bullet $ \textbf{Case III}. In this case $\mathfrak{g=su}\left( 2\right) \oplus \mathfrak{su}\left( 2\right) ,V=\mathbb{H}^{k_{1}}\oplus \mathbb{R}^{4}\oplus \mathbb{H}^{k_{2}}$ and $\mathfrak{n}= \mathfrak{su}\left( 2\right) \oplus \mathfrak{su}\left( 2\right) \oplus \mathbb{H}^{k_{1}}\oplus \mathbb{R}^{4}\oplus \mathbb{H}^{k_{2}}$. The first copy (resp. the second) of $%
\mathfrak{su}\left( 2\right) $ acts as $\mathfrak{sp}\left( 1\right) $ on $\mathbb{H}^{k_{1}}$ \ and trivially on $\mathbb{H}^{k_{2}}$ (resp. on $\mathbb{H}^{k_{2}}$ and trivially on $\mathbb{H}^{k_{1}})$, and as $%
\mathfrak{so}\left( 4\right) $ on $\mathbb{R}^{4}.$ Thus $U=Sp\left(
k_{1}\right) \times Sp\left( k_{2}\right) ,K=Spin\left( 4\right) \times U$ and $K_{\lambda } = T_{2}\times U,$ where 
\begin{equation}\label{T_2}
T_{2}:=\left\{ 
	\left(
	\begin{array}{lr}
	e^{i\theta_1} & 0 \\
	0 & e^{i\theta_2} \\
	
	\end{array}
	\right)
	: \,\theta_1, \theta_2 \in \mathbb{R}
\right\}
\end{equation}
 is a maximal torus of $Spin\left( 4\right)$. Since $\mathcal{P}\left(
 V\right) =\mathcal{P}\left( \mathbb{C}^{2k_{1}}\right) \otimes \mathcal{P}%
 \left( \mathbb{C}^{2}\right) \otimes \mathcal{P}\left( \mathbb{C}%
 ^{2k_{2}}\right) $, we can decompose the metaplectic representation as
 $$\varpi \downarrow K_{\lambda }=\left( \oplus_{j \geq 0} \, \chi_{j}\left( \theta_{1}\right) \eta^{k_1}_{j} \right) \otimes \left( \oplus_{l_{1},l_{2} \geq 0} \, \chi_{l_{1},l_{2}}\left( \theta _{1},\theta _{2}\right) \right) \otimes \left( \oplus_{s \geq 0} \, \chi_{s}\left( \theta _{2}\right) \eta^{k_2}_{s} \right),$$
 where $\eta^{k_i}_j$ is the natural action on the space $\mathcal{P}_j(\mathbb{C}^{2k_i})$.\\
 
 We know that $\psi _{\lambda ,j,l_{1},l_{2},s}\left( t,v\right) =\sum_{\alpha}
 \left\langle \pi _{\left\vert \lambda \right\vert }\left( t,v\right)
 h_{\alpha },h_{\alpha }\right\rangle $ where $\left\{ h_{a}\right\} $ is a
 basis of the irreducible component $W_{j,l_{1},l_{2},s}$ . Writing $v=\left(
 v_{1},u,v_{2}\right) ,$ with $v_{1}\in \mathbb{C}^{2k_{1}}, v_{2}\in \mathbb{C%
 }^{2k_{2}}$ and $u=\left( u_{1},u_{2}\right) \in \mathbb{C}^{2},$ an easy computation shows that
 $$\pi _{\left\vert \lambda \right\vert }\left( t,v\right) =\pi _{\left\vert
 	\lambda \right\vert }\left( \frac{t}{3},\left( v_{1},0,0\right) \right)
 \otimes \pi _{\left\vert \lambda \right\vert }\left( \frac{t}{3},\left(
 0,u,0\right) \right) \otimes \pi _{\left\vert \lambda \right\vert }\left( 
 \frac{t}{3},\left( 0,0,v_{2}\right) \right), $$ and since the trace of a tensor product is the product of the traces, we obtain 
 $$ \psi _{\lambda ,j,l_{1},l_{2},s}\left( t,v\right) =e^{i\left\vert \lambda \right\vert t} \, L(v),$$
 \begin{equation*}
 L(v) = L_{j}^{2k_{1}-1}\left( \frac{\left\vert \lambda \right\vert }{2%
 }\left\vert v_{1}\right\vert ^{2}\right) L_{l_{1}}^{0}\left( \frac{%
 	\left\vert \lambda \right\vert }{2}\left\vert u_{1}\right\vert ^{2}\right)
 L_{l_{2}}^{0}\left( \frac{\left\vert \lambda \right\vert }{2}\left\vert
 u_{2}\right\vert ^{2}\right) L_{s}^{2k_{2}-1}\left( \frac{\left\vert \lambda
 	\right\vert }{2}\left\vert v_{2}\right\vert ^{2}\right) e^{-\frac{\left\vert
 		\lambda \right\vert }{4}\left\vert v\right\vert ^{2}},
 \end{equation*}
where $L^{n}_k$ is the Laguerre polynomial of degree $k$.\\
 
 On the first hand, $SU\left( 2\right) $ is acting as $Im(\mathbb{H})$ (or $Sp\left(1\right)$) on each component of $\mathbb{C}^{2k_{i}}$, $i=1,2$. On the other hand, $G \simeq Sp\left( 1\right) \times Sp\left( 1\right) $ acts on $\mathbb{C}%
 ^{2}$ $\simeq \mathbb{H}$ by the rule: $v \longmapsto g.v=q_{1}vq_{2}$ $,$
 for $g=\left( q_{1},q_{2}\right) \in G,v\in \mathbb{H}$ . Then we have that 
 $$L_{j}^{2k_{1}-1}\left( \frac{\left\vert \lambda \right\vert }{2}\left\vert
 gv_{1}\right\vert ^{2}\right) =L_{j}^{2k_{1}-1}\left( \frac{\left\vert
 	\lambda \right\vert }{2}\left\vert v_{1}\right\vert ^{2}\right) ,\bigskip
 L_{s}^{2k_{2}-1}\left( \frac{\left\vert \lambda \right\vert }{2}\left\vert
 gv_{2}\right\vert ^{2}\right) =L_{s}^{2k_{2}-1}\left( \frac{\left\vert
 	\lambda \right\vert }{2}\left\vert v_{2}\right\vert ^{2}\right) ,$$ and the spherical functions are given by
 \begin{equation*}
 \phi _{\lambda ,j,l_{1},l_{2},s}\left( z,v\right) =\left(
 \int_{G/T}e^{i\left\vert \lambda \right\vert \left\langle Ad\left(
 	g^{-1}\right) Y_{\lambda },z\right\rangle }L_{l_{1}}^{0}\left( \frac{%
 	\left\vert \lambda \right\vert }{2}\left\vert \left( gu\right)
 _{1}\right\vert ^{2}\right) L_{l_{2}}^{0}\left( \frac{\left\vert \lambda
 	\right\vert }{2}\left\vert \left( gu\right) _{2}\right\vert ^{2}\right)
 dg\right) \times
 \end{equation*}
 
 \begin{equation*}
 e^{-\frac{\left\vert \lambda \right\vert }{4}\left\vert v\right\vert 
 	^{\substack{  \\ 2}}}L_{j}^{2k_{1}-1}\left( \frac{\left\vert \lambda
 	\right\vert }{2}\left\vert v_{1}\right\vert ^{2}\right)
 L_{s}^{2k_{2}-1}\left( \frac{\left\vert \lambda \right\vert }{2}\left\vert
 v_{2}\right\vert ^{2}\right)
 \end{equation*}
 where $g\left( u_{1},u_{2}\right) =\left( \left( gu\right) _{1},\left(
 gu\right) _{2}\right) $.
 
 \medskip

$\bullet $ \textbf{Case IV}. Here $\mathfrak{g=sp}\left( 2\right) ,V$ $=$ $%
\left( \mathbb{H}^{2}\right) ^{n}$ and $\mathfrak{n=sp}\left( 2\right)
\oplus \left( \mathbb{H}^{2}\right) ^{n}$. The real action is given by $\pi
\left( g\right) \left( v_{1},...,v_{n}\right) =$ $\left(
gv_{1},...,gv_{n}\right) ,v_{j}\in \mathbb{H}^{2}$ for $j=1,...,n$. By the Schur's Lemma, the group of orthogonal intertwining operator is isomorphic to $Sp(n)$ with the action on $(\mathbb{H}^{2})^{n}$ given by the $2n \times 2n$ matrix $a_{ij}I$, $a_{ij} \in \mathbb{H}$, and $I$ is the $2 \times 2$ identity. 
Thus $K=Sp\left( 2\right) \times Sp\left( n\right) ,$ and $K_{\lambda
}=T_{2}\times Sp\left( n\right) $ where $T_2$ is a maximal torus of $Sp\left( 2\right)$ as in \eqref{T_2}.\\

Writing $\left( v_{1},...,v_{n}\right) =\left( \left( u_{1},w_{1}\right)
,...,\left( u_{n},w_{n}\right) \right) ,$ with $\left( u_{j},w_{j}\right)
\in \mathbb{H}^{2}$ for $j=1,...n,$ we have that the action of $Sp\left(n\right) $ is given by $$g\left( \left( u_{1},w_{1}\right) ,...,\left( u_{n},w_{n}\right) \right)
=\left( g\left( u_{1},...,u_{n}\right) ,g\left( w_{1},...,w_{n}\right)
\right).$$ So the action of $Sp\left( n\right) $ on $\mathcal{P}\left( 
\mathbb{C}^{4n}\right) $ splits as $\mathcal{P}\left( \mathbb{C}^{2n}\right)
\otimes \mathcal{P}\left( \mathbb{C}^{2n}\right) =\oplus _{r,s}\mathcal{P}%
_{r}\left( \mathbb{C}^{2n}\right) \otimes \mathcal{P}_{s}\left( \mathbb{C}%
^{2n}\right) $. On the other hand, $T_{2}$ acts naturally on each $\mathbb{H}^{2}$ by $$
\left(
\begin{array}{cc}
e^{i\theta _{1}} & 0 \\ 
0 & e^{i\theta _{2}}
\end{array}
\right) (u_j,v_j) = (e^{-i \theta_{1}} u_j, e^{-i \theta_{2}} v_j).
$$

 As above we denote by $\eta _{j}$ the irreducible representation of $Sp\left( n\right) $
on $\mathcal{P}_{j}\left( \mathbb{C}^{2n}\right) $. In \cite{KT} it is proved that $\eta_s \otimes \eta_r = \oplus_{j=0}^{s} \oplus_{i=0}^{j} \eta_{(r+s-j-i,j-i)}$ where $\eta_{(r+s-j-i,j-i)}$ is the irreducible representation of $Sp(n)$ with highest weight $(r+s-j-i,j-i,0, \cdots ,0)$, for $r \geq s$.  Then
\begin{equation*}
\varpi \downarrow K_{\lambda }=\oplus _{r,s}\chi _{r,s}\left( \theta
_{1},\theta _{2}\right) \otimes (\oplus _{j=0}^{s}\oplus _{i=0}^{j}\eta
_{\left( r+s-j-i,j-i\right) }),
\end{equation*}%
where $\chi _{r,s}\left( \theta _{1},\theta _{2}\right) =e^{-i\left( r\theta
	_{1}+s\theta _{2}\right) }$.\\

The polynomial $q_{r,s,j,i}$ in $\mathcal{P}\left( \mathbb{C}^{4n}\right) $
corresponding to the spherical function $\psi _{\lambda,r,s,j,i}$ is $K_{\lambda }-$%
invariant, thus $q_{r,s,j,i}\left( t,v\right) =q_{r,s,j,i}\left(
t,\left\vert u\right\vert ^{2},\left\vert w\right\vert ^{2}\right) ,$ but $%
Sp\left( 2\right) $ preserves the norm of $\left( u_{j},w_{j}\right) $ for $%
j=1,...n.$ Thus 
 $$\phi _{\lambda
	,r,s,j,i}\left( z,v\right) = e^{- \frac{|\lambda|}{4} |v|^{2}} \left(\int_{G/T} e^{i|\lambda| \langle Ad(g^{-1})Y_{\lambda},z \rangle} q_{r,s,j,i} (t,g \cdot v) \, d\dot{g} \right).$$\\

$\bullet $ \textbf{Case V}. In this case $\mathfrak{g=su}\left(
n\right) ,V=\mathbb{C}^{n}$, $\mathfrak{n=su}\left( n\right) \oplus \mathbb{C%
}^{n}$ and $\pi $ is the canonical action of $\mathfrak{su}\left( n\right) $ on $\mathbb{C}^{n}.$ Since it is irreducible, the group of the orthogonal
intertwining operators is a one dimensional torus which we denote by $T_{1}.$
So $K=SU\left( n\right) \times T_{1},$ $K_{\lambda }=T_{n-1}\times T_{1}$ where $T_{n-1}$ is a maximal torus of $SU\left( n\right) $, and
$$\varpi \downarrow K_{\lambda }=\oplus_{m_{1},...,m_{n} \in \mathbb{Z\geq }%
	0} \ \chi _{m_{1},...,m_{n}},$$ where $\chi _{m_{1},...,m_{n}} \ \left( \theta
_{1},...,\theta _{n}\right) =e^{-i\left( m_{1}\theta _{1}+...+m_{n}\theta
	_{n}\right) }$.

The set of spherical functions corresponding to the pair $\left(
T_{n},H_{n}\right) $ were computed in \cite{Fo}:
\begin{equation}\label{caseV}
\psi _{\lambda ,m_{1},...,m_{n}}\left( t,v\right)
=e^{i\left\vert \lambda \right\vert t} \prod_{j=1}^{n} L_{m_{j}}^{0}\left( 
\frac{\left\vert \lambda \right\vert }{2}\left\vert v_{j}\right\vert
^{2}\right) e^{-\frac{\left\vert \lambda \right\vert }{4}\left\vert
	v\right\vert ^{2}},
\end{equation}
 and setting  $gv=\left(
\left( gv\right) _{1},...,\left( gv\right) _{n}\right)$ for $g\in SU\left( n\right)$, we obtain the following expression for the set of generic spherical functions
$$\phi _{\lambda ,m_{1},...,m_{n}}\left( z,v\right)
=e^{-\frac{\left\vert \lambda \right\vert }{4}\left\vert v\right\vert
	^{2}}\left( \int_{SU\left( n\right) /T_{n}}e^{i\left\vert \lambda
	\right\vert \left\langle Ad\left( g^{-1}\right) Y_{\lambda },z\right\rangle
}\prod_{j=1}^{n}L_{m_{j}}^{0}\left( \frac{\left\vert \lambda \right\vert }{%
	2}\left\vert \left( gv\right) _{j}\right\vert ^{2}\right) d\dot{g} \right), $$\\
with $L^{0}_{m_j}$ is the Laguerre polynomial of degree $m_j$.\\

$\bullet $ \textbf{Case VI}. In this case $\mathfrak{g=so}\left( 2n\right)
,V=\mathbb{R}^{2n},$ $\mathfrak{n=so}\left( 2n\right) \oplus \mathbb{R}^{2n}$
and $\pi $ is the canonical action of $\mathfrak{so}\left( 2n\right) $ on $\mathbb{R}%
^{2n}.$ Since it is irreducible, the group of the orthogonal intertwining
operators is trivial. Thus $K=SO\left( 2n\right) $ and $K_{\lambda }$ is an $%
n$ dimensional torus.


 As in the above case, the metaplectic representation is decomposed into a direct sum of characters without multiplicity  as
 $$\varpi \downarrow_{T_n} = \bigoplus_{(m_1, \cdots, m_n) \in \mathbb{Z} \geq 0} \ \chi_{(m_1, \cdots, m_n)},$$
where $(\chi_{(m_1, \cdots, m_n)}(\theta_1, \cdots , \theta_n))(z_1^{m_1} \cdots z_n^{m_n}) = e^{-im_1 \theta_1} z_1^{m_1} \cdots e^{-im_n \theta_n} z_n^{m_n}.$
 
Then $\psi _{\lambda
	,m_{1},...,m_{n}}$ is given by \eqref{caseV}, but here we
have to integrate on $SO\left( 2n\right) /T_{n}$, that is,
$$\phi _{\lambda ,m_{1},...,m_{n}}\left( z,v\right)
=e^{-\frac{\left\vert \lambda \right\vert }{4}\left\vert v\right\vert
	^{2}}\left( \int_{SO\left( 2n\right) /T_{n}}e^{i\left\vert \lambda
	\right\vert \left\langle Ad\left( g^{-1}\right) Y_{\lambda },z\right\rangle
}\prod_{j=1}^{n}L_{m_{j}}^{0}\left( \frac{\left\vert \lambda \right\vert }{%
	2}\left\vert \left( gv\right) _{j}\right\vert ^{2}\right) d\dot{g} \right),$$\\
where $L^{0}_{m_j}$ is the Laguerre polynomial of degree $m_j$.\

\medskip

$\bullet $ \textbf{Case VII}. In this case $N(\mathfrak{g},V)$ is the $(2n+1)$-dimensional Heisenberg group, $K=U(n)$ and the set of bounded spherical functions was described by many authors, see for example \cite{BJR2} and \cite{Fo}.

\medskip

$\bullet $ \textbf{Case VIII}. In this case $\mathfrak{g=u}\left( 2\right) ,V=\left( 
\mathbb{C}^{2}\right) ^{k}\oplus \left( \mathbb{C}^{2}\right) ^{n}$ and  $\mathfrak{n} = \mathfrak{u}(2) \oplus \left( 
\mathbb{C}^{2}\right) ^{k}\oplus \left( \mathbb{C}^{2}\right) ^{n}$. $\mathfrak{u}\left( 2\right) $ acts in the following way:

- on each of the $k$ components of $\left( \mathbb{C}^{2}\right) ^{k}$ it acts in the
natural way, and

- in $\left( \mathbb{C}^{2}\right) ^{n}$ the center of $\mathfrak{u}\left( 2\right) $
acts trivially and the semisimple part acts as $\mathfrak{sp}\left( 1\right) 
$ (or $\text{Im}(\mathbb{H})$) on the left side on each of the $n$ components
of $\left( \mathbb{C}^{2}\right) ^{n}.$\\

For  $n$ positive, $K=SU\left( 2\right) \times U\left( k\right) \times Sp\left( n\right)$, $K_{\lambda }=T_{1}\times U\left( k\right) \times Sp\left( n\right) $ is a maximal torus contained in $SU\left( 2\right)$. The action of $T_1$ on  $\mathcal{P}\left( (\mathbb{C}^{2})^{k}\right) $ is given  by $$p\left( u_{1},w_{1},...,u_{k},w_{k}\right) \rightarrow p\left( e^{i\theta
}u_{1},e^{-i\theta }w_{1},...,e^{i\theta }u_{k},e^{-i\theta }w_{k}\right)$$ for $ p \in \mathcal{P}(\mathbb{C}^{2k}), e^{i\theta} \in T_1, \, (u_i,w_i) \in \mathbb{C}^{2}.$
 Also $U\left( k\right) $ acts by a multiple of
the $2\times 2$ identity on each of the $k$ components of $\left( \mathbb{C}%
^{2}\right) ^{k}.$

We denote by $\nu_r$ (resp. $\eta_r$) the irreducible action of $U(k)$ (resp. $Sp(k)$) on $\mathcal{P}_r(\mathbb{C}^{2k})$.

As $U(k)$-module $\mathcal{P}(\mathbb{C}^{2k}) = \mathcal{P}(\mathbb{C}^{k}) \otimes \mathcal{P}(\mathbb{C}^{k}) = \oplus_{r,s} \nu_r \otimes \nu_s$. Moreover, $$\nu_r \otimes \nu_s = \oplus_{j=1}^{min(r,s)} \nu_{(r+s-2j,j)},$$ where $\nu_{(r+s-2j,j)}$ denotes the irreducible representation of highest weight $(r+s-2j,j,0,\cdots,0)$ (\cite{FH}, page 225). Thus $\mathcal{P}\left( V\right) =\mathcal{P}\left( \mathbb{C}^{2k}\right)
\otimes \mathcal{P}\left( \mathbb{C}^{2n}\right) $ and the decomposition of the metaplectic representation in irreducible components is
\begin{equation*}
\varpi \downarrow_{K_{\lambda}}= \left( \oplus _{r,s,j \in \mathbb{Z} \geq 0} \ \chi _{r-s}\left( \theta \right) \oplus_{j=1}^{min(r,s)} \nu_{(r+s-2j,j)} \right) \otimes \left( \oplus_{\mathit{l} \in \mathbb{Z} \geq 0} \ \chi_{\mathit{l}}(\theta) \, \eta_{\mathit{l}} \right). 
\end{equation*}

We set $v=\left( \mathbf{v}^{1},\mathbf{v}^{2} \right) ,\mathbf{v}^{1} \in \mathbb{C}^{2k}, \mathbf{v}^{2} \in 
\mathbb{C}^{2n}.$ As in case \textbf{III} by applying an elementary property of the trace of a linear map defined on a tensor product, we obtain the expression for the spherical function 
$$\psi _{ \lambda ,r,s,j,l}\left( t,v\right)
=e^{i\left\vert \lambda \right\vert t}e^{-\frac{\left\vert \lambda
		\right\vert }{4}\left\vert v\right\vert ^{\substack{  \\ 2}}}q_{j,r,s}\left( 
\frac{\left\vert \lambda \right\vert }{2} \mathbf{v}^{1}\right) L_{l}^{2n-1}\left( 
\frac{\left\vert \lambda \right\vert }{2}\left\vert \mathbf{v}^{2} \right\vert
^{2}\right) ,j=1,...,\min(r,s), \,l \geq 0,$$
where $L^{2n-1}_l$ is the Laguerre polynomial of degree $l$.

Let  $\mathbf{v}^{1}=\left( u_1,w_1,\cdots,u_k,w_k \right)\in \mathbb{C}^{2k} $, we have that $q_{j,r,s}$ is a polynomial in $|u|^{2}, |w|^{2}$ since $q_{j,r,s}$ is $U(k)$-invariant, but the action of $G=SU(2)$ is componentwise on each $(u_j,w_j)$. Thus
$$\phi_{\lambda,r,s,j,l}(t,v)= \left( \int_{G/T} e^{i|\lambda| \langle Ad(g^{-1})Y_{\lambda},z \rangle} q_{j,r,s}(\frac{|\lambda|}{2}g \cdot \mathbf{v}^{1}) \, d\dot{g} \right) e^{-(\frac{|\lambda|}{4} |v|^{2})} L^{2n-1}_l(\frac{|\lambda|}{2}|\mathbf{v}^{2}|^{2}).$$

As we observe in the proof of Theorem \eqref{teonil}, in this case there is no generic spherical functions associated to the center of $\mathfrak{g}$.\\

Case $n=0.$ To the set of spherical functions described above (with the
obvious changes since $n=0$) we add the set of spherical functions
corresponding to the elements of the center of $\mathfrak{g}.$ For this purpose, we need to decompose the metaplectic action of $K=$ $SU\left( 2\right)
\times U\left( k\right) $ on $\mathcal{P}\left( \mathbb{C}^{2k}\right) $.
We assume $k\geq 2$.

It is easy to see that $\mathbb{C}^{2k}$ is equivalent to $\mathbb{C}^{2}\otimes \mathbb{C}%
^{k} $ with the standard action  as $(SU(2)\times U(k))$-module. So we can apply the Corollary 5.2.8 from \cite{GW} to obtain the desired decomposition. Indeed, following the notation there,
let $\mathcal{F}_{2}^{\mu }$ be the irreducible representation of $Gl\left(
2,\mathbb{C}\right) $ with highest weight $\mu$ satisfying $\mu =\mu
_{1}\varepsilon _{1}+\mu _{2}\varepsilon _{2}$ where $\varepsilon_1, \varepsilon_{2}$ are the coordinate weights
 and $\mu _{1}+\mu _{2}=d$ with $\mu _{1} \geq \mu _{2}\geq
 0$ and let $\mathcal{F}_{k}^{\mu }$ be the irreducible
representation of $Gl\left( k,\mathbb{C}\right) $ with highest weight the $%
k$-tuple $\left( \mu _{1},\mu _{2},0,...,0\right)$. Then we have that the
homogeneous polynomials of degree $d$ over $\mathbb{C}^{2}\otimes \mathbb{C}^{k}$ decomposes as 
\begin{equation*}
\mathcal{P}_{d}\left( \mathbb{C}^{2}\otimes \mathbb{C}^{k}\right) =\bigoplus
_{\mu }\mathcal{F}_{2}^{\mu }\otimes \mathcal{F}_{k}^{\mu }
\end{equation*}
such that $\mu _{1}+\mu _{2}=d$. Now in order to restrict to $Sl\left( 2,\mathbb{C}
\right) \times Gl\left( k,\mathbb{C}\right)$, let $l=\mu _{1}-\mu _{2}$ and
let $\mathcal{F}^{l}$ be the irreducible representation of $Sl\left( 2,%
\mathbb{C}\right) $ of dimension $l+1$. Then, the restriction to $Sl\left( 2,%
\mathbb{C}\right) \times Gl\left( k,\mathbb{C}\right) $ decomposes as 
\begin{equation*}
\mathcal{P}\left( \mathbb{C}^{2}\otimes \mathbb{C}^{k}\right) =\oplus _{l,d}%
\mathcal{F}^{l}\otimes \mathcal{F}_{k}^{\mu },l,d\geq 0
\end{equation*}%
with $\mu _{1}+\mu _{2}=d$ and $\mu _{1}-\mu _{2}=l$.
The corresponding set of spherical functions is given by \eqref{numerito}
and it is parametrized by $\left\{ \lambda ,d,l\right\} $ with $\lambda \neq
0$ and $l, d\geq 0.$

\medskip

 $\bullet $ \textbf{Case IX}. In this case $\mathfrak{g}=\mathfrak{u}(n)$, $V=\mathbb{C}^{n}$ and the action is the standard.  Thus $G=SU(n)$, $U=T^{1}$ and $K=U(n)$. For $X_{\lambda} \in \mathfrak{g}'$ the corresponding spherical functions are as in the \textbf{case V}.\\
 
 For the elements in the center of $\mathfrak{g}$, the stabilizer is $K$ and 
 $$\omega \downarrow _K = \oplus_{r \geq 0} v_r$$
 where $v_r$ denotes the irreducible representation of $U(n)$ on the homogeneous polynomials of degree $r$. The set of associated spherical functions are $$\phi_{\lambda,r}(t,v) = e^{i|\lambda|t} L_r^{n-1} \left( \frac{\lambda}{2} |v|^{2} \right) e^{-\frac{|\lambda|}{4}|v|^{2}},$$
 where $L^{n-1}_r$ is the Laguerre polynomial of degree $r$.
 
 \medskip

$\bullet $ \textbf{Case X}. In this case $\mathfrak{g=su}\left( m_{1}\right)
\, \oplus ...\oplus \, \mathfrak{su}\left( m_{r}\right)  \oplus  \mathfrak{c}$ where  
$\mathfrak{c}$ is its center and there are $\alpha$ copies of $\mathfrak{su}(2)$. The abelian component satisfies $ 1 \leq dim (\mathfrak{c})\leq r-1$; $V=V_{1}\oplus
...\oplus V_{r},$ and the representation $\pi $ of $\mathfrak{g}$ on $V$ is
defined in the following way:

For each $1\leq j\leq r,\mathfrak{su}\left( m_{j}\right) $ acts non
trivially only on $V_{j}$, $\mathfrak{c}$ has a unique subspace $\mathfrak{c}%
_{j}$ acting non trivially on $V_{j}$ and $\dim (\mathfrak{c}_{j})=1$. If $%
m_{j}\geq 3, \, V_{j}=\mathbb{C}^{m_{j}}$ and $\mathfrak{su}\left( m_{j}\right)
\oplus \mathfrak{c}_{j}$ (which is isomorphic to $\mathfrak{u}\left( m_{j}\right) $)
acts in the standard way on $V_{j};$ we denote by $S^{1}$ the group of
intertwining operators of this action. If $m_{j}=2,V_{j}=\left( \mathbb{C}%
^{2}\right) ^{k}\oplus \left( \mathbb{C}^{2}\right) ^{n}$ and

$\mathfrak{su}\left( 2\right) \oplus \mathfrak{c}_{j}$ acts on $V_{j}$ as in
case \textbf{VIII}, therefore the group of intertwining operators is $U\left( k\right) \times Sp\left( n\right) .$

We first consider the case $\mathfrak{g=su}\left( m\right) \oplus \mathfrak{su}\left( 2\right) \oplus \mathfrak{c},m\geq 3, \, n > 0$. Thus $\dim \mathfrak{c=1,}$ $%
V=\mathbb{C}^{m}\oplus \left( \mathbb{C}^{2}\right) ^{k}\oplus \left( 
\mathbb{C}^{2}\right) ^{n}$, $G=SU\left( m\right) \times SU\left( 2\right) $, $
K=G\times S^{1}\times U\left( k\right) \times Sp\left( n\right) .$ Let $T_{m-1}$ be a maximal torus of $SU\left( m\right) ,$ thus $T_{m-1}\times S^{1}$
is ( isomorphic to) an $n$-dimensional torus acting on $\mathbb{C}^{m}$ in
the standard way, $K_{\lambda }=T_{m-1}\times S^{1}\times T_{1}\times
U\left( k\right) \times Sp\left( n\right) $ and $T_{1}\times U\left(
k\right) \times Sp\left( n\right) $ acts on $\left( \mathbb{C}^{2}\right)
^{k}\oplus \left( \mathbb{C}^{2}\right) ^{n}$ as in case \textbf{VIII}. Thus
\begin{equation*}
\varpi \downarrow K_{\lambda } =
\end{equation*}
\begin{equation*}
\left( \oplus _{k_{1},...,k_{m} \in \mathbb{Z\geq }0}\chi
_{k_{1},...,k_{m}}\left( \theta _{1},...,\theta _{m}\right) \right) \otimes
\oplus _{r,s}\oplus _{i=1}^{\min \left( r,s\right) }\chi _{r-s}\left( \theta
\right) \oplus _{i=1}^{\min (r,s)}\upsilon _{r+s-2i,i}\otimes \left( \oplus
_{j}\chi _{j}\left( \theta \right) \eta _{j}\right) .
\end{equation*}

For $v=\left( u,v_{1},v_{2}\right) ,u\in \mathbb{C}^{m},v_{1}\in 
\mathbb{C}^{2k},v_{2}\in \mathbb{C}^{2n}$, the spherical functions associated to the
pair $\left( K_{\lambda },N_{\lambda }\right)$ are
\begin{equation*}
\psi _{\lambda ,\mathbf{k,},r,s,i,j}\left( t,u,v_{1},v_{2}\right) = e^{i\left\vert \lambda \right\vert t}e^{-\frac{\left\vert \lambda
		\right\vert }{4}\left\vert v\right\vert ^{\substack{  \\ 2}}}\prod_{j=1}^{m}L_{k_{j}}^{0}\left( \frac{\left\vert \lambda \right\vert }{2}
\left\vert u_{j}\right\vert ^{2}\right) q_{i,r,s}\left( \frac{\left\vert
	\lambda \right\vert }{2}v_{1}\right) L_{j}^{2n-1}\left( \frac{\left\vert
	\lambda \right\vert }{2}\left\vert v_{2}\right\vert ^{2}\right),
\end{equation*}
where $\mathbf{k=}\left( k_{1},...,k_{m}\right) ,i=1,...,\min \left(
r,s\right) $ and $r,s,$ $j\geq 0.$

The action of $G$ is componentwise so writing $$q_{\mathbf{k},
	,r,s,i}\left( u,v_{1}\right) =\prod_{j=1}^{m} L_{k_{j}}^{0}\left( \frac{%
	\left\vert \lambda \right\vert }{2}\left\vert u_{j}\right\vert ^{2}\right)
q_{i,r,s}\left( \frac{\left\vert \lambda \right\vert }{2}v_{1}\right)$$
we have that the expression for the spherical function is
$$\phi _{\left\vert \lambda \right\vert ,\mathbf{k,},r,s,i,j}\left(
z,v\right) =\left( \int_{G/T}e^{i\left\vert \lambda \right\vert
	t\left\langle Ad\left( g\right) Y_{\lambda },z\right\rangle }q_{\mathbf{k,},r,s,i,}\left( \pi \left( g\right) \left( u,v_{1}\right) \right)
dg\right) L_{j}^{2n-1}\left( \frac{\left\vert \lambda \right\vert }{2}%
\left\vert v_{2}\right\vert ^{2}\right) e^{-\frac{\left\vert \lambda
		\right\vert }{4}\left\vert v\right\vert ^{\substack{  \\ 2}}}.$$
	
	\bigskip 

When $n=0$, the completion of the set of spherical functions corresponding to the elements of the center $\mathfrak{c}$ of $\mathfrak{g}$ follows from the case \textbf{IX} and case \textbf{VIII} with $n=0,$ since $U\left(
m\right) $ acts on $\mathcal{P}\left( \mathbb{C}^{m}\right) $ and $SU\left(
2\right) \times U\left( k\right) $ acts on $\mathcal{P}\left( \mathbb{C}%
^{2k}\right) .$ We have finished the simplest case.

\medskip

The general case follows similar lines.

\end{document}